\documentclass[12pt]{amsart}
\usepackage[all]{xy}
\usepackage{pgf}
\usepackage{tikz}
\usepackage{tikz-cd}
\usepackage[margin=1in]{geometry}  
\usepackage{graphicx}              
\usepackage{amsmath}               
\usepackage{amsfonts}              
\usepackage{amsthm}                
\usepackage{euscript}

\theoremstyle{definition}
\newtheorem{thm}{Theorem}[section]
\newtheorem{lem}[thm]{Lemma}
\newtheorem{prop}[thm]{Proposition}

\newtheorem{rem}[thm]{Remark}
\newtheorem{defn}{Definition}[section]
\newtheorem{exa}[thm]{Example}

\DeclareMathOperator{\op}{op}
\DeclareMathOperator{\Gr}{Gr}
\DeclareMathOperator{\ch}{ch}
\DeclareMathOperator{\id}{id}

\DeclareMathOperator{\Hom}{Hom}
\DeclareMathOperator{\Spec}{Spec}

\DeclareMathOperator{\cone}{Cone}
\DeclareMathOperator{\Star}{Star}
\DeclareMathOperator{\rk}{rk}
\DeclareMathOperator{\divi}{div}
\newcommand{\RR}{\mathbb{R}}      
\newcommand{\ZZ}{\mathbb{Z}}      
\newcommand{\VV}{\mathbb{V}}      
\newcommand{\QQ}{\mathbb{Q}}

\newcommand{\Aff}{\mathbb{A}}
\newcommand{\GG}{\mathbb{G}}
\newcommand{\tbf}{\textbf}
\newcommand{\ol}{\overline}
\newcommand{\A}{\mathbb{A}}
\newcommand{\PP}{\mathbb{P}}
\newcommand{\mc}{\mathcal}
\newcommand{\X}{{\mathcal X}}

\tikzset{node distance=2cm, auto}

\title{The Chow cohomology of affine toric varieties}

\author{Dan Edidin}
\author{Ryan Richey}
\address{Department of Mathematics, University of Missouri, Columbia MO 65211}
\email{edidind@missouri.edu}
\email{}

\thanks{The first author was supported by Simons Collaboration Grant
  315460.}


\subjclass[2010]{14C15, 14F43, 14M25}

\begin{document}

\begin{abstract}
  We study the Fulton-Macpherson Chow cohomology of affine toric
  varieties. In particular, we prove that the Chow cohomology vanishes
  in positive degree. We prove an analogous result for the operational
  $K$-theory defined by Anderson and Payne.
\end{abstract}

\maketitle

\section{Introduction}
	
Toric varieties are a rich source of examples for studying delicate
questions in both ordinary and equivariant intersection theory.  Both
the Chow and cohomology rings of smooth, projective toric varieties
are isomorphic to the Stanley-Reisner ring of the associated fan,
while Fulton and Sturmfels \cite{FuSt:97} proved that the Chow
cohomology ring of a complete toric variety is the ring of Minkowski
weights.  Later, Payne \cite{Pay:06} calculated the $T$-equivariant
Chow cohomology ring of an arbitrary toric variety and identified it
with the ring of piecewise polynomial functions on the corresponding
fan.  However, as noted by Katz and Payne \cite{KaPa:08}, the
relationship between equivariant and ordinary Chow cohomology is quite
subtle.

The main result of this paper, Theorem \ref{thm.Chowcoh}, is to show
that the (non-equivariant) Chow cohomology ring of any affine toric
variety vanishes in degree greater than zero.  By contrast, the Chow
homology groups of an affine toric variety need not vanish and can
have both torsion and non-torsion elements.

Although their coordinate rings have prime cycles which are not
rationally equivalent to 0, our result shows that the Chow
cohomology rings of affine toric varieties  behave like that of affine spaces.  As observed by
Gubeladze \cite{Gub:99} singular affine toric varieties constitute a
class of intuitively contractible varieties naturally generalizing
affine spaces.  In particular, all vector bundles are trivial so the
Grothendieck ring is isomorphic to $\ZZ$. In a parallel result we also
prove, Theorem \ref{thm.opKT}, that the operational $K$-theory ring
defined by Anderson and Payne in \cite{AnPa:15} is also isomorphic to
$\ZZ$. Again, the Grothendieck group of coherent sheaves on an affine
toric variety need not be isomorphic to $\ZZ$; see Example \ref{ex.final}.

Our Chow cohomology result fits
into a framework developed in \cite{EdSa:17}. There,
the first author and Satriano show that for spaces, such as toric
varieties, which are good moduli spaces of smooth Artin stacks,
elements of the Chow cohomology ring have canonical representatives as
{\em topologically strong} cycles on the corresponding stack.

Based on the results of \cite{EdSa:17} the authors of the present
paper have a conjectural description of the Chow cohomology ring of an
arbitrary toric variety in terms of strong complete intersection
cycles on the corresponding canonical stack. This will be presented elsewhere
\cite{Ric:19}, but we remark that our results here imply the
conjecture for affine toric varieties.

\subsection{Outline of the proof}
The proof of Theorem \ref{thm.Chowcoh} uses
Kimura's fundamental exact
sequence for Chow cohomology \cite{Kim:92} and
the stack-theoretic
results proved in \cite{EdSa:17}.

If $X(\sigma)$ is the affine toric variety defined by a cone $\sigma$, then
we denote by $\sigma^*$ the fan obtained by star subdivision with
respect to the ray $v = \Star(\sigma)$. The map of toric varieties
$X(\sigma^*) \to X(\sigma)$ is a blowup with exceptional divisor $E=V(\rho_v)$.
Using Kimura's exact sequence for Chow cohomology, we show
that in order to prove that $A^k_{\op}(X(\sigma))=0$, it suffices to prove the
restriction map $A_{\op}^k(X(\sigma^*)) \to A_{\op}^k(E)$ is injective.

Denote by ${\mathcal E}$ and $\X(\sigma^*)$ the corresponding toric canonical 
stacks. Using the results of \cite{EdSa:17} we can show
that $A^k(E)_\QQ \to A^k(X(\sigma^*))_\QQ$ is injective by showing
that the corresponding map of Chow rings of smooth stacks $A^k(\X(\sigma^*))
\to A^k({\mathcal E})$ is injective. By computing the Chow rings
of toric canonical stacks we show directly that the corresponding map
is in fact an isomorphism. This shows that $A^k(X(\sigma))_\QQ = 0$
for $k > 0$. The proof concludes by using results of Hausel and Sturmfels
to show that Chow cohomology of any affine toric variety is torsion free.

The proof for operational $K$-theory uses a similar stategy.  Here
we combine Anderson and Payne's \cite{AnPa:15} results with Theorem \ref{thm.kpullbackinj}
to obtain the desired result.

\subsection{Acknowledgments} The authors are grateful to Sam Payne for suggesting that the corresponding result for operational $K$-theory should hold.

\section{Background on Toric Varieties and canonical stacks}

\subsection{Basic toric terminology}	
Following \cite{Ful:93} or \cite{CLS:11}, let $k$ be be an
algebraically closed field of characteristic zero, $N$ and $M$ be rank
$n$ dual
lattices over $\ZZ$ (which will be the lattices of 1-parameter
subgroups and characters of a torus), and let $\sigma$ be a strongly
convex, rational, and polyhedral cone in $N_{\RR} = N\otimes_{\ZZ}
\RR$.  From $\sigma$, we can construct the dual cone $\sigma \check{}$
in $M_{\RR}$ which satisfies the property that $\sigma \check{}\cap
M$ is a semigroup.  The $\tbf{affine toric variety}$
$X(\sigma)$ is defined as:
$\displaystyle X(\sigma) :=  \Spec(k[(\sigma \check{}\cap M)])$.
		
A fan $\Delta$ in $N_{\RR}$ is a collection of cones satisfying the
following two properties: each face of a cone is in $\Delta$, and the
intersection of every two cones is a face of each.  Each cone in
$\Delta$ generates an affine toric variety which glue together to define a normal toric variety
$X(\Delta)$ with an action of the torus $T_N = X(\{0\})$.
Conversely, it is well known that every normal variety containing a
dense torus whose action extends to an action on the total variety
arises from a fan.

The orbit-cone correspondence associates to each cone $\sigma$
in a fan a $\Delta$ a torus orbit $O(\sigma)$ of dimension
$n - \dim \sigma$. We denote the closure of $O(\sigma)$ by $V(\sigma)$.
The orbit closure is a finite union of orbits corresponding to the faces
of $\sigma$. 

Torus equivariant morphisms of toric varieties $X(\Delta') \rightarrow X(\Delta)$ correspond to morphisms of
lattices $\phi \colon N' \to N$ which send cones of $\Delta'$ into cones of $\Delta$.
Various algebraic-geometric properties of the morphism can be
understood in terms of conditions on the associated map of lattices.
In particular, properness is equivalent to the condition that
$\phi^{-1}(|\Delta|) =
|\Delta'|$.  If $\phi$ is the
identity map, then we say $\Delta'$ is a
$\tbf{refinement}$ of $\Delta$.  In what follows, we will be
interested in the inverse image of a closed subscheme made up of orbit
closures, especially in the case that $\Delta'$ is a refinement of
$\Delta$.
	
\begin{thm} \label{thm.inverseimage}\cite[Lemma 3.3.21 and Theorem 11.1.10]{CLS:11} With
  notation as above, suppose $\sigma$ is a
cone in $\Delta$ with orbit closure $V(\sigma) \subset X(\Delta)$.
If $\sigma_1,...,\sigma_k$ denote the minimal cones of $\Delta'$
such that $\sigma$ is minimal over $\phi(\sigma_i)$ then the irreducible decomposition of the inverse image of
$V(\sigma)$ is:
\[ \varphi^{-1}(V(\sigma)) = \bigcup^k_{i=1} V(\sigma_i).\]
In particular, in the case of a
refinement, $\sigma_i$ can be
characterized as the minimal cones
intersecting the relative interior of
$\sigma$.
\end{thm}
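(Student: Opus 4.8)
The plan is to reduce everything to the orbit--cone correspondence and the known description of how a toric morphism acts on torus orbits, and then to read off the irreducible components from the combinatorics of the two fans. First I would observe that $\varphi^{-1}(V(\sigma))$ is closed and invariant under the torus $T_{N'}$, hence is a union of orbit closures $V(\sigma')$. Thus it suffices to decide, for each cone $\sigma'\in\Delta'$, whether the orbit $O(\sigma')$ is carried into $V(\sigma)$, and then to extract the minimal such cones, whose orbit closures are the irreducible components (recall $V(\sigma')\subseteq V(\sigma'')$ precisely when $\sigma''$ is a face of $\sigma'$, so distinct minimal cones give distinct maximal components).

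The engine of the argument is the companion statement (Lemma 3.3.21 of the same reference): the morphism $\varphi$ sends $O(\sigma')$ into $O(\tau)$, where $\tau$ is the unique cone of $\Delta$ with $\phi_\RR(\mathrm{Relint}\,\sigma')\subseteq\mathrm{Relint}\,\tau$, i.e. the minimal cone of $\Delta$ containing $\phi_\RR(\sigma')$. Combining this with the stratification $V(\sigma)=\bigsqcup_{\gamma\succeq\sigma}O(\gamma)$ yields the clean criterion: $O(\sigma')\subseteq\varphi^{-1}(V(\sigma))$ if and only if $\sigma$ is a face of the minimal cone over $\phi(\sigma')$. In particular each $\sigma_i$ for which $\sigma$ is minimal over $\phi(\sigma_i)$ satisfies the criterion, so $V(\sigma_i)\subseteq\varphi^{-1}(V(\sigma))$ by closedness, giving the inclusion $\bigcup_i V(\sigma_i)\subseteq\varphi^{-1}(V(\sigma))$.

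The reverse inclusion, together with the identification of the $V(\sigma_i)$ as exactly the components, is the main obstacle and is where the convex geometry enters. The two facts I would establish are: (a) the assignment $\sigma'\mapsto\tau(\sigma')$ is monotone for the face relation---a limiting/relative-interior argument shows that if $\sigma'$ is a face of $\sigma''$ then $\tau(\sigma')$ is a face of $\tau(\sigma'')$---so that the collection of cones satisfying the criterion is upward closed and the preimage is the union of the $V(\sigma_i)$ over its minimal members; and (b) given a cone satisfying the criterion, a supporting-hyperplane argument produces a proper face of it that still satisfies the criterion, allowing one to descend to a minimal cone in finitely many steps. Concretely, if $u\in M_\RR$ is a supporting functional cutting out $\sigma$ as a face of $\tau(\sigma')$, then $\phi^* u$ is nonnegative on $\sigma'$ and cuts out the relevant face $\sigma'\cap(\phi^*u)^{\perp}$, whose image lands in $\tau(\sigma')\cap u^{\perp}=\sigma$; tracking relative interiors shows this face is proper exactly when $\sigma\ne\tau(\sigma')$, which drives the descent.

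Finally, for the refinement case $\phi=\mathrm{id}$ the cone $\tau(\sigma')$ is simply the smallest cone of $\Delta$ containing $\sigma'$, so the condition ``$\sigma$ minimal over $\phi(\sigma_i)$'' becomes the condition that $\sigma_i$ meets the relative interior of $\sigma$; the minimal cones with this property are then the minimal cones of $\Delta'$ intersecting $\mathrm{Relint}\,\sigma$, as claimed. I expect step (b)---the supporting-hyperplane descent to the minimal cones---to be the technical heart, with the remainder following formally from the orbit--cone dictionary and the monotonicity in step (a).
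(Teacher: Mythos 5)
The paper does not prove this statement---it is quoted from Cox--Little--Schenck---so there is no in-paper argument to compare against and I can only judge your proposal on its own terms. The first two thirds are sound and standard: the preimage is a closed torus-invariant set, hence a union of orbit closures; the criterion $O(\sigma')\subseteq\varphi^{-1}(V(\sigma))$ if and only if $\sigma\preceq\tau(\sigma')$, where $\tau(\sigma')$ denotes the minimal cone of $\Delta$ containing $\phi(\sigma')$, follows from $\varphi(O(\sigma'))\subseteq O(\tau(\sigma'))$ together with $V(\sigma)=\bigsqcup_{\gamma\succeq\sigma}O(\gamma)$; and the monotonicity in your step (a) correctly yields $\varphi^{-1}(V(\sigma))=\bigcup V(\gamma)$, the union taken over the minimal cones $\gamma$ of $\Delta'$ satisfying $\sigma\preceq\tau(\gamma)$.

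The gap is step (b), and it is not merely under-argued: it is false, because the indexing set your step (a) produces does not coincide with the one in the statement. A cone $\gamma$ that is minimal subject to $\sigma\preceq\tau(\gamma)$ need not satisfy $\tau(\gamma)=\sigma$, and your supporting-hyperplane face $\gamma\cap(\phi^*u)^{\perp}$ maps into $\sigma$ but possibly into a \emph{proper} face of $\sigma$, so it can fall out of the relevant set rather than descend within it. Concretely, let $\Delta$ be the fan of $\A^2$, let $\Delta'$ be its star subdivision at $e_1+e_2$ (so $\varphi$ is the blowup of the origin), and take $\sigma=\cone(e_1)$. Then $\varphi^{-1}(V(\sigma))$ is the total transform of the axis, namely $V(\cone(e_1))\cup V(\cone(e_1+e_2))$ (strict transform together with the exceptional divisor), whereas the ray $\cone(e_1+e_2)$ satisfies $\sigma\preceq\tau(\cone(e_1+e_2))=\cone(e_1,e_2)$ and is minimal with this property, yet neither it nor any of its faces has $\tau$ equal to $\sigma$ or meets $\mathrm{Relint}(\sigma)$; its only proper face is $\{0\}$, which already lies below $\sigma$, so no descent of the kind you propose exists. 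This example shows that the statement as transcribed (both the general clause and the refinement clause) fails for non-maximal $\sigma$; the correct description is the one your step (a) proves, with $\sigma_i$ running over the minimal cones satisfying $\sigma\preceq\tau(\sigma_i)$. The two formulations agree precisely when $\sigma$ is a maximal cone, since then $\sigma\preceq\tau(\gamma)$ forces $\tau(\gamma)=\sigma$; that is the only case the paper actually uses ($\sigma$ full-dimensional and $V(\sigma)$ the torus-fixed point), and there your argument does close up.
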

		
There is a special refinement which will be important in the remainder
of the paper - the star subdivision.  Geometrically, this corresponds
to the blowup of $X(\Delta)$ at a distinguished point
$\gamma_{\sigma}$.  Specifically, given a cone
$\sigma$, let $u_1,...,u_k$ be the primitive generators of the rays of
$\sigma$, then the $\tbf{star}$ of $\sigma$ is the vector
$\Star(\sigma) = \sum^k_{i=1} u_i$.  The $\tbf{star subdivision}$ of
$\sigma$, see \cite[Definition 3.3.13]{CLS:11}, is the fan denoted
$\sigma^*$ whose maximal cones are all of the form
$\cone(\mu,\Star(\sigma))$ where $\mu \subset \sigma$ is a facet.
	
	As a final remark, recall that a cone is $\tbf{smooth}$ if the
        minimal generators of the rays form part of an integral basis
        of $N$, and that a cone is $\tbf{simplicial}$ if the minimal
        generators are linearly independent over $\RR$.  A fan is
        smooth or simplicial if each of its cones are smooth or
        simplicial respectively.  It is well known that the toric
        variety $X(\Delta)$ is smooth if and only if each cone in
        $\Delta$ is smooth.
	
\subsection{Cox Construction of a Toric Variety}
The Cox construction produces any toric variety as the good quotient
of an open subscheme of affine space by the action of a diagonalizable
group.  Following \cite[Chapter 5]{CLS:11}, we briefly recall the
salient details of the construction.  Given a full-dimensional fan
$\Delta$ on a lattice $N$, the total coordinate ring of $X(\Delta)$ is
the polynomial ring $k[x_{\rho}: \rho \in \Delta(1)]$ freely generated
by the variables $x_{\rho}$. Suppose $n = |\Delta(1)|$ such that $\A^n
= \Spec(k[x_{\rho}])$, then for each cone $\sigma \in \Delta$, define
the monomial:
\begin{center}
$\displaystyle x^{\sigma} = \prod x_{\rho_i}$ where $\{\rho_i\}$ are the rays not contained in $\sigma$.
\end{center}
Furthermore, define $B(\Delta) = (x^{\sigma})_{\sigma \in \Delta}$ to be the ideal generated by these monomials.  The exceptional set of $\Delta$ is the following closed subscheme of $\A^n$:
\begin{center}
$\displaystyle Z(\Delta) = V(B(\Delta)) \subset \A^n$.
\end{center}

The closed set $Z(\Delta)$ has an explicit, combinatorial description. Recall that a $\tbf{primitive}$ $\tbf{collection}$ of rays is a collection of rays
satisfies the following two properties: the collection is not
contained in a single cone of $\Delta$, but every proper subset is
contained in a cone of $\Delta$.  For each primitive collection of
rays $\{\rho_i\}$, we can generate a linear subspace given by
$V((x_{\rho_i})_{i\in I})$.

\begin{prop}\cite[Proposition 5.1.6]{CLS:11} The irreducible components of $Z(\Delta)$ are in bijection with
  the linear subspaces $\VV((x_{\rho_i})_{i\in I})$ where $I$ is a
  primitive collection of rays.
\end{prop}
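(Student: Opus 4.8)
The plan is to compute the zero locus $Z(\Delta) = V(B(\Delta))$ directly from the combinatorics of the generating monomials $x^\sigma$. For a point $p \in \A^n$, let $I_p = \{\rho \in \Delta(1) : x_\rho(p) = 0\}$ record its vanishing coordinates. Since $x^\sigma = \prod_{\rho \notin \sigma} x_\rho$ is the product of the variables attached to rays \emph{not} contained in $\sigma$, we have $x^\sigma(p) \ne 0$ precisely when $I_p$ contains no ray outside $\sigma$, i.e.\ when $I_p$ is contained in the set of rays of $\sigma$. Consequently $p \notin Z(\Delta)$ if and only if $x^\sigma(p) \ne 0$ for some $\sigma \in \Delta$, which happens exactly when the rays of $I_p$ all lie in a single cone of $\Delta$. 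Equivalently, $p \in Z(\Delta)$ if and only if $I_p$ is \emph{not} contained in any single cone of $\Delta$.

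Next I would package this as a union of coordinate subspaces. For $S \subseteq \Delta(1)$ write $\VV(S) := \VV((x_\rho)_{\rho \in S})$, noting that a generic point of $\VV(S)$ has vanishing set exactly $S$. Because $B(\Delta)$ is a monomial ideal, $Z(\Delta)$ is a union of coordinate subspaces, and the previous paragraph shows that $\VV(S) \subseteq Z(\Delta)$ if and only if $S$ is not contained in any single cone of $\Delta$: the ``only if'' direction is seen by testing the generic point, and the ``if'' direction follows since the membership condition on $I_p$ is preserved under enlarging the vanishing set, so that every point of $\VV(S)$ then lies in $Z(\Delta)$. Therefore $Z(\Delta) = \bigcup_S \VV(S)$, the union taken over all $S \subseteq \Delta(1)$ that are not contained in a single cone.

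Finally I would extract the irreducible components. Each $\VV(S)$ is a linear subspace, hence irreducible, and $\VV(S) \subseteq \VV(S')$ exactly when $S' \subseteq S$; thus the maximal members of the above union---the irreducible components of $Z(\Delta)$---correspond to the \emph{minimal} subsets $S$ that fail to lie in a single cone. By definition, a set of rays is minimal with this property precisely when it is a primitive collection: it is not contained in any cone of $\Delta$, yet every proper subset is. Since distinct subsets define distinct coordinate subspaces, this yields the asserted bijection between primitive collections $I$ and the irreducible components $\VV((x_{\rho_i})_{i\in I})$ of $Z(\Delta)$.

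The argument is essentially formal once the nonvanishing condition for $x^\sigma$ is translated into the combinatorial statement about $I_p$; the only point demanding care is the final matching of ``minimal non-faces'' with primitive collections, together with the observation that the union over all non-faces already reduces to its minimal members, since any subset not contained in a cone contains a minimal such subset. I do not anticipate a genuine obstacle here.
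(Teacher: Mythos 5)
Your proof is correct: the translation of $x^\sigma(p)\ne 0$ into the condition $I_p\subseteq\sigma(1)$, the resulting description of $Z(\Delta)$ as the union of the coordinate subspaces $\VV(S)$ over the sets $S$ not contained in any cone, and the identification of the minimal such $S$ with primitive collections are all sound. The paper itself gives no proof (it simply cites \cite[Proposition 5.1.6]{CLS:11}), and your argument is essentially the standard one found there, so there is nothing further to reconcile.
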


Given a fan $\Delta$, let $\Sigma_{\Delta}$ denote the fan generated by cones
$\displaystyle \ol{\sigma} = \cone(e_{\rho}: \rho \in \sigma(1)) \subset \RR^{|\Delta(1)|}$ where $\sigma$ runs through the cones of $\Delta$.

\begin{prop}\cite[Proposition 5.1.9]{CLS:11}
  (a)  $X(\Sigma_{\Delta}) = \A^n \setminus  Z(\Delta)$.

  (b) The map $e_\rho \mapsto u_\rho$ defines a map of lattices
  $\ZZ^{\Sigma(1)} \to N$ that is compatible with the fans $\Sigma_\Delta
  \subset \RR^{\Delta(1)}$ and $\Delta \subset N_{\RR}$.
\end{prop}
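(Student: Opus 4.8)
The plan is to treat the two parts essentially independently, with (a) carrying the geometric content and (b) being a one-line compatibility check. For (a), I would first identify each affine chart of $X(\Sigma_\Delta)$ with a distinguished open of $\A^n$. Fix a cone $\sigma \in \Delta$ and consider $\ol{\sigma} = \cone(e_\rho : \rho \in \sigma(1))$. Since the $e_\rho$ are distinct standard basis vectors, $\ol{\sigma}$ is a smooth cone and its dual $\ol{\sigma}^{\vee} \subset (\RR^{\Delta(1)})^{*}$ is cut out by $m_\rho \geq 0$ for $\rho \in \sigma(1)$, with no condition on the remaining coordinates. Hence the semigroup algebra $k[\ol{\sigma}^{\vee} \cap \ZZ^{\Delta(1)}]$ is the localization $k[x_\rho : \rho \in \sigma(1)][x_\rho^{\pm 1} : \rho \notin \sigma(1)]$, which is precisely $k[x_\rho][1/x^\sigma]$. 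This identifies the chart $X(\ol{\sigma})$ with the distinguished open $D(x^\sigma) = \A^n \setminus V(x^\sigma)$, the locus where every coordinate indexed by a ray outside $\sigma$ is invertible.

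Next I would verify that these identifications are compatible with the gluing data, so that the chart-by-chart isomorphisms patch to a global one. The combinatorial input is that in a fan $\sigma \cap \tau$ is a common face with $(\sigma \cap \tau)(1) = \sigma(1) \cap \tau(1)$, so that $\ol{\sigma} \cap \ol{\tau} = \ol{\sigma \cap \tau}$ in $\Sigma_\Delta$. On the ring side this matches $D(x^\sigma) \cap D(x^\tau) = D(x^\sigma x^\tau) = D(x^{\sigma \cap \tau})$, since inverting $x^\sigma$ and $x^\tau$ inverts exactly the coordinates indexed by $\rho \notin \sigma(1) \cap \tau(1)$. Thus the charts overlap inside $\A^n$ exactly as prescribed by $\Sigma_\Delta$, and taking the union over all cones yields
\[ X(\Sigma_\Delta) = \bigcup_{\sigma \in \Delta} D(x^\sigma) = \A^n \setminus \bigcap_{\sigma \in \Delta} V(x^\sigma) = \A^n \setminus V(B(\Delta)) = \A^n \setminus Z(\Delta), \]
using that the generators $x^\sigma$ of $B(\Delta)$ have common zero locus $Z(\Delta)$ by definition. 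This is (a).

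For (b), the assignment $e_\rho \mapsto u_\rho$ extends $\ZZ$-linearly to a lattice homomorphism $\phi \colon \ZZ^{\Delta(1)} \to N$, and compatibility with the fans is immediate. For each $\sigma \in \Delta$ the real extension $\phi_\RR$ sends $\ol{\sigma} = \cone(e_\rho : \rho \in \sigma(1))$ onto $\cone(u_\rho : \rho \in \sigma(1)) = \sigma$, since $\sigma$ is by definition generated by its primitive ray generators $u_\rho$. Hence $\phi_\RR$ carries every cone of $\Sigma_\Delta$ onto a cone of $\Delta$, which is exactly the condition for $\phi$ to be a map of fans; it induces the Cox quotient morphism $X(\Sigma_\Delta) \to X(\Delta)$.

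I expect the only genuine subtlety to lie in the bookkeeping for (a): confirming that $\Sigma_\Delta$ is an honest fan and that the bijection $\sigma \leftrightarrow \ol{\sigma}$ respects every face relation and intersection, so that the local isomorphisms with distinguished opens of $\A^n$ glue to a global isomorphism rather than merely a chart-wise one. Once the face correspondence $\ol{\sigma} \cap \ol{\tau} = \ol{\sigma \cap \tau}$ is in hand, both the identification with $\A^n \setminus Z(\Delta)$ and the compatibility statement in (b) follow formally.
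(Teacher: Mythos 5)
Your argument is correct; the paper gives no proof of this statement, citing it directly from \cite[Proposition 5.1.9]{CLS:11}, and your route --- identifying each chart $X(\ol{\sigma})$ with the distinguished open $D(x^\sigma)$ via the dual cone computation, checking the gluing through $\ol{\sigma}\cap\ol{\tau}=\ol{\sigma\cap\tau}$, and taking the union to get $\A^n\setminus V(B(\Delta))=\A^n\setminus Z(\Delta)$ --- is exactly the standard argument given there. Part (b) is, as you say, immediate from $\phi_\RR(\ol{\sigma})=\cone(u_\rho:\rho\in\sigma(1))=\sigma$.
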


\begin{rem} \label{rem.orbitclosures}
  Note that if $\sigma$ is a cone of $\Delta$, then the orbit
  closure $V(\sigma)$ is the image of the linear subspace
  $\VV(\{x_\rho\}_{\rho \in \sigma(1)})$.
  \end{rem}

The diagonalizable group $G$ is
constructed as follows: suppose $\dim(\Delta) = k$, then the morphism
of lattices $\ZZ^n\rightarrow \ZZ^k$ defined by sending the canonical
basis vectors to the minimal generators of rays of $\Delta$ has finite
cokernel; hence, the dual map is injective, and we set $L$ to be the
cokernel of this map.  The group $G$ is defined to be $\Hom(L,\GG_m)$,
the group of characters of $L$, which injects into the torus of
$\A^n$.

\begin{thm}
  \cite[Theorem 5.1.11(a)]{CLS:11} Under the above notation,
  the toric map\\ $\left(\A^n \setminus  Z(\Delta)\right) \to X(\Delta)$
  identifies $X(\Delta)$ as the good quotient $\left(\A^n \setminus Z(\Delta)\right)/G$.
\end{thm}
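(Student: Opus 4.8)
The plan is to establish the good-quotient property locally on the affine charts of $X(\Delta)$ coming from the maximal cones, thereby reducing the assertion to a single computation of rings of invariants.

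First I would record the exact sequences underlying the construction. Because $\Delta$ is full-dimensional, the map $\ZZ^n \to N$ sending $e_\rho \mapsto u_\rho$ is surjective after tensoring with $\QQ$, so its dual $M \hookrightarrow \ZZ^n$ is injective with cokernel $L$, giving $0 \to M \to \ZZ^n \to L \to 0$. Applying $\Hom(-,\GG_m)$ produces $1 \to G \to (\GG_m)^n \to T_N \to 1$, which exhibits $G = \Hom(L,\GG_m)$ as the kernel of the map of tori. Since the toric morphism $\pi \colon \A^n \setminus Z(\Delta) \to X(\Delta)$ is equivariant along $(\GG_m)^n \to T_N$, it is automatically $G$-invariant.

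Next I would set up the local charts. For each maximal cone $\sigma \in \Delta$ put $\ol{\sigma} = \cone(e_\rho \colon \rho \in \sigma(1))$ and $U_{\ol{\sigma}} = \Spec k[\ol{\sigma}^\vee \cap \ZZ^n]$; concretely $U_{\ol{\sigma}}$ is the principal open set where $x_\rho \neq 0$ for every $\rho \notin \sigma(1)$, so $U_{\ol{\sigma}} \cong \A^{\sigma(1)} \times (\GG_m)^{\Delta(1)\setminus \sigma(1)}$. Because $B(\Delta)$ is generated by the monomials $\prod_{\rho \notin \sigma(1)} x_\rho$, these charts cover $\A^n \setminus Z(\Delta)$, and the compatibility of $\pi$ with the fans $\Sigma_\Delta$ and $\Delta$ noted above gives $\pi^{-1}(U_\sigma) = U_{\ol{\sigma}}$, where $U_\sigma = \Spec k[\sigma^\vee \cap M]$.

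The heart of the argument is the computation of $k[U_{\ol\sigma}]^G$. The group $G$ acts on a monomial $x^a$ through the image of $a$ under $\ZZ^n \to L$, so $x^a$ is $G$-invariant exactly when $a$ lies in the sublattice $M \subset \ZZ^n$; and such an $a = (\langle m, u_\rho\rangle)_\rho$ lies in $\ol\sigma^\vee$ precisely when $\langle m, u_\rho \rangle \geq 0$ for all $\rho \in \sigma(1)$, i.e. when $m \in \sigma^\vee$. Hence $k[U_{\ol\sigma}]^G = k[\sigma^\vee \cap M] = k[U_\sigma]$, so each $U_{\ol\sigma} \to U_\sigma$ is the affine GIT quotient by the diagonalizable (hence reductive) group $G$, and is therefore a good quotient. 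Finally I would glue: on overlaps the local quotient maps agree because they are restrictions of the single morphism $\pi$, and a morphism restricting to a good quotient over each chart of an affine cover of the target is itself a good quotient, giving $X(\Delta) \cong (\A^n \setminus Z(\Delta))/G$. I expect the invariant computation to be the main obstacle: one must carefully match the combinatorial monoid $\ol\sigma^\vee \cap \ZZ^n$ with the character grading defined by $0 \to M \to \ZZ^n \to L \to 0$ and verify that $G$-invariants are exactly the image of $M$. Once this identification of rings is in place, the remaining steps are formal.
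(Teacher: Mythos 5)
This statement is quoted from \cite[Theorem 5.1.11(a)]{CLS:11} and the paper supplies no proof of its own, so there is nothing internal to compare against; your argument is correct and is essentially the standard proof given in the cited reference: cover $\A^n\setminus Z(\Delta)$ by the principal opens $U_{\ol\sigma}$ for maximal $\sigma$, identify $k[U_{\ol\sigma}]^G$ with $k[\sigma^\vee\cap M]$ via the exact sequence $0\to M\to\ZZ^n\to L\to 0$, and glue the resulting affine GIT quotients (using that $G$ is diagonalizable, hence linearly reductive in characteristic zero). The one step worth spelling out a little more is the identification $\pi^{-1}(U_\sigma)=U_{\ol\sigma}$, which follows because a cone $\ol\tau$ of $\Sigma_\Delta$ maps into $\sigma$ exactly when $\tau\subseteq\sigma$.
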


\subsection{Canonical Toric Stacks}
Given a full-dimensional fan $\Delta$ on a lattice $N$ and its
associated toric variety $X(\Delta)$, the canonical toric stack is a
smooth quotient stack possessing a special relationship with
$X(\Delta)$.  Following \cite[Section 5]{GeSa:15a}, there are two
equivalent definitions for the canonical stack.  The
first is given in terms of the Cox construction, and the second
constructs $\mc{X}(\Delta)$ as a toric stack arising from a canonical
stacky fan.  The theory of stacky fans is worked out in \cite{GeSa:15a}
and provides a combinatorial gadget for toric stacks analogous
to fans for toric varieties. A stacky fan is a pair $(\Delta,
\beta:N\rightarrow L)$ where $\Delta$ is a lattice on $N$ and $\beta$
is a morphism of lattices with finite cokernel, and $\beta$ induces a
surjective morphism of tori $T_N\rightarrow T_L$ with kernel
$G_{\beta}$.  The toric stack associated to this data is 
the quotient stack $[X(\Delta)/G_{\beta}]$ with torus $T_L$.

Following \cite[Section 3]{GeSa:15a}, toric morphisms between toric
stacks arising from stacky fans $(\Delta', N'\rightarrow L'), (\Delta,
N\rightarrow L)$ correspond to morphisms of lattices which are
compatible with the stacky fans: specifically, 
$\phi:N'\rightarrow N$ and $\Phi:L'\rightarrow L$ are morphisms of
lattices such that for every cone $\sigma \in \Delta'$, $\phi(\sigma)$
is contained in a cone of $\Delta$ and the following diagram commutes:

\[
\xymatrix{
N' \ar[r]^\phi \ar[d]^{\beta'} & N \ar[d]^\beta \\
L' \ar[r]^{\Phi}
& L
}
\]

\begin{defn}
The canonical stack of $X(\Delta)$ will be denoted by $\mc{X}(\Delta)$ and is equipped with a toric morphism $\mc{X}(\Delta) \rightarrow X(\Delta)$; it can be constructed in either of the following ways:
\begin{enumerate}
\item The Cox construction of a toric variety produces an open subscheme $U\subset \A^{|\Delta(1)|}$ and a diagonalizable group $G$ such that $X(\Delta) \cong U/G$, \cite[Theorem 5.1.11]{CLS:11}.  The canonical stack of $X(\Delta)$ is the quotient stack $[U/G]$ with morphism $[U/G]\rightarrow X(\Delta)$.
\item Equivalently, let $(\Sigma_{\Delta},\beta:\ZZ^{|\Delta(1)|}\rightarrow N)$ be the stacky fan where $\beta$ is defined by sending the canonical basis vectors to the primitive generators of $\Delta$, and $\Sigma_{\Delta}$ is the canonical fan defined in the previous section.  The toric stack associated to this stacky fan is the canonical stack $\mc{X}(\Delta)$.  

Furthermore, the toric morphism $\mc{X}(\Delta) \rightarrow X(\Delta)$ is easily constructed from the following morphism of stacky fans:
\begin{center}
\begin{tikzcd}
\ZZ^{|\Delta(1)|} \arrow[r] \arrow[d]
& N \arrow[d, equal] \\
N \arrow[r, equal]
& N
\end{tikzcd}
\end{center}
\end{enumerate}
\end{defn}

Recall  \cite[Definition 4.1]{Alp:13}, that a morphism
from an Artin stack to an algebraic space is a $\tbf{good moduli space
  morphism}$ if it is cohomologically affine and Stein.  In the
context of quotient stacks, this is a natural generalization of good
quotients.  As a consequence of \cite[Theorem 6.3]{GeSa:15a}, the
following holds:

\begin{thm}
\cite[Example 6.23]{GeSa:15a} The canonical stack morphism $\mc{X}(\Delta) \rightarrow X(\Delta)$ is a good moduli space morphism.
\end{thm}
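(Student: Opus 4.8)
The plan is to reduce the statement to Alper's foundational theory of good moduli spaces, using the explicit quotient presentation supplied by the Cox construction. Recall that $\mc{X}(\Delta) = [U/G]$, where $U = \A^{|\Delta(1)|} \setminus Z(\Delta)$ and $G = \Hom(L,\GG_m)$, and that the structure morphism is the canonical map $\pi \colon [U/G] \to U/G = X(\Delta)$ to the good quotient. By Definition~4.1 of \cite{Alp:13} it suffices to check that $\pi$ is cohomologically affine and that the natural map $\mc{O}_{X(\Delta)} \to \pi_*\mc{O}_{\mc{X}(\Delta)}$ is an isomorphism.

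The first observation is that $G$, being diagonalizable, is linearly reductive; this is the hypothesis that makes Alper's machinery applicable. The second, and organizing, observation is that the property of being a good moduli space morphism is Zariski-local on the target, so it is enough to verify the claim over an affine open cover of $X(\Delta)$. For a maximal cone $\sigma \in \Delta$, the affine toric variety $X(\sigma) \subset X(\Delta)$ is such an open, and its preimage under $U \to X(\Delta)$ is the $G$-invariant affine open $W_\sigma = \{x^\sigma \neq 0\} = \Spec A_\sigma$, where $A_\sigma$ is the corresponding localization of $k[x_\rho \colon \rho \in \Delta(1)]$ and $x^\sigma$ is a $G$-eigenvector. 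Over this chart $\pi$ restricts to $[\Spec A_\sigma / G] \to \Spec(A_\sigma^G)$, and the Cox description of the good quotient identifies $\Spec(A_\sigma^G)$ with $X(\sigma)$.

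It then remains to treat the affine local case, which is precisely \cite[Theorem~13.2]{Alp:13}: for a linearly reductive $G$ acting on an affine scheme $\Spec A_\sigma$, the morphism $[\Spec A_\sigma/G] \to \Spec(A_\sigma^G)$ is a good moduli space morphism, cohomological affineness coming from the exactness of the $G$-invariants functor and the Stein condition from $\Gamma(X(\sigma),\mc{O}) = A_\sigma^G$. Gluing these charts using the locality of the good moduli space property yields the theorem; this is the content we extract from \cite[Theorem~6.3]{GeSa:15a}. The only point demanding genuine care is the compatibility of the two constructions of the quotient, namely matching the GIT quotient $\Spec(A_\sigma^G)$ appearing in Alper's framework with the good quotient $X(\sigma)$ of the Cox construction, and confirming that the $W_\sigma$ are $G$-invariant and cover $U$. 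Once these identifications are in place, linear reductivity makes both defining conditions essentially formal.
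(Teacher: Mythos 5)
Your argument is correct, but it takes a different route from the paper: the paper offers no proof of this statement at all, simply citing \cite[Theorem 6.3 and Example 6.23]{GeSa:15a}, where the result is deduced from Geraschenko--Satriano's general criterion for when a toric stack morphism arising from a map of stacky fans is a good moduli space morphism. Your proof bypasses the stacky-fan machinery entirely and works directly with the Cox presentation $[U/G]$: since $G$ is diagonalizable and hence linearly reductive, Alper's affine GIT theorem gives that $[\Spec A_\sigma/G]\to\Spec(A_\sigma^G)$ is a good moduli space, and you glue these over the $G$-invariant affine charts $W_\sigma=D(x^\sigma)$ (which indeed cover $U$, since $x^\sigma$ divides $x^\tau$ for $\tau\preceq\sigma$, so the maximal cones suffice) using the Zariski-locality on the target of both cohomological affineness and the Stein condition. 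What your approach buys is self-containedness --- it rests only on \cite{Alp:13} and on Cox's identification $\Spec(A_\sigma^G)\cong X(\sigma)$ from \cite[Theorem 5.1.11]{CLS:11}, which is exactly the compatibility point you correctly flag as the one requiring care; what the paper's citation buys is generality, since \cite[Theorem 6.3]{GeSa:15a} handles arbitrary toric stacks from stacky fans, not just canonical stacks of normal toric varieties. Two minor points worth making explicit if you write this up: the preimage of the open $X(\sigma)\subset X(\Delta)$ under $U\to X(\Delta)$ is $W_\sigma$ because the cones of $\Sigma_\Delta$ mapping into $\sigma$ are precisely the faces of $\ol{\sigma}$, and the identification of the glued local good moduli spaces with the globally defined morphism $\mc{X}(\Delta)\to X(\Delta)$ uses the universal property (uniqueness) of good moduli spaces.
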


\begin{exa} \label{ex.basiccone}
Let $\sigma$ denote the cone generated by
$\{(1,0,1),(0,-1,1),(-1,0,1),(0,1,1)\}$ in $\RR^3$, then $X(\sigma)$
is a 3-dimensional, affine, singular toric variety with four rays.
Let $\beta:\ZZ^4 \rightarrow \ZZ^3$ be defined by sending the
canonical basis vectors to the primitive generators of $\sigma$; note,
the preimage of $\sigma$ under $\beta$ is the cone defining $\A^4$.
Furthermore, the cokernel of the dual of $\beta$ is $\ZZ^4\rightarrow
\ZZ^3$ defined as the matrix $[1,-1,1,-1]$; hence, if we apply $D(-)
:= \Hom(-,\GG_m)$ to this cokernel, we see that $G_{\beta} = \GG_m$
and defines an action of $\A^4$ with weights $(1,-1,1,-1)$.  Thus, the
canonical stack of $X(\sigma)$ is $\mc{X}(\sigma) = [\A^4/\GG_m]$.
\end{exa}

\section{Chow Cohomology of Algebraic Spaces and Stacks}

Following \cite{Ful:84}, let $A_k(X)$ denote the group of dimension
$k$ cycle classes modulo rational equivalence, and if $X$ is
equidimensional, let $A^k(X)$ denote the group of codimension $k$
cycle classes modulo rational equivalence.  In addition if $X$ is
smooth, then the intersection product on $A_k(X)$ as constructed in
\cite[Chapter 6.1]{Ful:84} makes $A^*(X)$ into a commutative, graded
ring.  For general schemes, without any assumptions on
non-singularity, \cite[Chapter 17]{Ful:84} constructs a graded ring
$A^*_{\op}(X) := \oplus_{k \geq 0} A^k_{\op}(X)$ defined as follows: an
element $c\in A^k_{\op}(X)$ is a collection of homomorphisms of groups:
\[ c^{(k)}_g: A_pX'  \rightarrow A_{p-k}X'\]
for every morphism $g:X'\rightarrow X$  which are
compatible with respect to proper pushforward, and flat and
l.c.i. pullbacks (see \cite[Definition's 17.1 and 17.3]{Ful:84}).  The
product is given by composition and turns $A^*_{\op}(X)$ into
a unital, associative, graded ring called the $\tbf{Chow
  cohomology ring}$ of $X$. Moreover, if
$X$ has a resolution of singularities (e.g. if $X$ is a toric variety) then
$A^*_{\op}(X)$ is known to be commutative.
If $X$ is smooth, then \cite[Corollary
  17.4]{Ful:84} proves that the Poincar\'e-Duality map $A^k_{\op}(X)
\rightarrow A^k(X) = A_{n-k}(X)$ is an isomorphism of rings wherein
the intersection product agrees with the product given by composition.

\begin{rem}
In \cite[Chapter 17]{Ful:84} the Chow cohomology
ring is also denoted $A^*(X)$ without the inclusion of the subscript
'op', regardless if $X$ is smooth or not.  We will deviate from this
convention to emphasize the distinction between operational classes in
the smooth and non-smooth cases.
\end{rem}

For smooth quotient stacks $\mc{X} = [Z/G]$, the Chow ring $A^*(\mc{X})$
is identified with the $G$-equivariant Chow ring $A^*_G(Z)$ by \cite{EdGr:98}.
Furthermore, if $\pi:\mc{X} \rightarrow X$
is any morphism between a stack and an algebraic space, \cite[Proposition
  2.10]{EdSa:17} proves that there is a
a pullback map $\pi^*:A^*_{\op}(X) \rightarrow A^*(\mc{X})$, $c \mapsto c \cap [\mc{X}]$.  In this paper we make essential use of the following result.
\begin{thm}
\cite[Theorem 1.1]{EdSa:17} Let $\mc{X}$ be a smooth, connected, properly stable, Artin stack with good moduli space $\pi:\mc{X}\rightarrow X$, then $\pi^*$ is injective over $\QQ$.
\end{thm}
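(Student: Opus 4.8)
The plan is to show that a class $c \in A^k_{\op}(X)_{\QQ}$ with $\pi^* c = 0$ acts trivially on the Chow homology of every variety over $X$, and hence vanishes. First I would reduce to \emph{smooth} test schemes. Since we are in characteristic zero and $X$ admits resolution of singularities, I claim $c = 0$ if and only if $h^* c = 0$ in $A^*(W)$ for every morphism $h \colon W \to X$ with $W$ smooth. Indeed, $c$ is determined by the maps $c \cap (-) \colon A_*(V) \to A_{*-k}(V)$, and since these are compatible with proper pushforward and $A_*(V)$ is generated by classes of subvarieties, it suffices to show $c \cap [V] = 0$ for every integral $V \to X$. Passing to a resolution $p \colon \widetilde{V} \to V$ (proper and birational), the projection formula gives $c \cap [V] = p_*(c \cap [\widetilde{V}])$, while Poincar\'e duality on the smooth $\widetilde{V}$ identifies $c \cap [\widetilde{V}]$ with the image of $h^* c$ under the isomorphism $A^*(\widetilde{V}) \xrightarrow{\sim} A_*(\widetilde{V})$. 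Thus vanishing of all the pullbacks $h^* c$ to smooth schemes forces $c = 0$.

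Next I would exploit base change of the good moduli space morphism. Fixing a smooth $h \colon W \to X$, form the fiber product $\pi_W \colon \mc{W} = \mc{X} \times_X W \to W$. By Alper's stability of good moduli space morphisms under arbitrary base change on the good moduli space, $\pi_W$ is again a good moduli space morphism, with good moduli space $W$, and $\mc{W}$ inherits proper stability. Naturality of the pullback map of \cite[Proposition 2.10]{EdSa:17}, applied to the cartesian square, then yields $(h^* c) \cap [\mc{W}] = 0$ in $A_*(\mc{W})$, since this class is the image of $\pi^* c = c \cap [\mc{X}] = 0$.

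The core of the argument is then to detect $h^* c$ on the locus where $\pi_W$ is proper. Proper stability provides a dense open $U \subseteq W$ over which $\pi_W$ restricts to the coarse moduli space morphism $q \colon \mc{W}_U \to U$ of a tame Deligne--Mumford stack; this morphism is proper and generically a gerbe, so $q_*[\mc{W}_U] = d\,[U]$ for some $d \in \QQ_{>0}$. Restricting $(h^* c) \cap [\mc{W}] = 0$ to the open substack $\mc{W}_U$ and applying proper pushforward together with the projection formula for operational classes gives
\[
0 = q_*\!\left( (h^* c) \cap [\mc{W}_U] \right) = (h^* c) \cap q_*[\mc{W}_U] = d\,\big( (h^* c) \cap [U] \big),
\]
so that $(h^* c) \cap [U] = 0$ after inverting $d$. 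Hence $h^* c$ acts trivially on the fundamental class over the dense open $U$. To finish, I would stratify $W$ by stabilizer type of $\pi_W$ and run a descending noetherian induction: on each stratum the morphism is again a gerbe of positive rational degree over its coarse space, so the same pushforward computation kills the contribution of that stratum, and the localization sequence assembles these into the vanishing of $(h^* c) \cap [W]$; Poincar\'e duality on the smooth $W$ then gives $h^* c = 0$.

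I expect the main obstacle to be controlling the \emph{unstable boundary}, where $\pi$ is no longer proper: the clean projection-formula identity only runs over the locus on which the good moduli space morphism restricts to a proper coarse-space morphism, and a class supported on the complement that vanishes on a dense open need not itself be zero. Making the stratification and induction rigorous requires that the restriction of the properly stable good moduli structure to each stratum again be of the same type, and that the operational action be compatible with the localization sequence; the accompanying technical point is the naturality of the stack-theoretic pullback under the \emph{non-flat} base change $\mc{W} \to \mc{X}$, which underlies the vanishing $(h^* c) \cap [\mc{W}] = 0$ used throughout.
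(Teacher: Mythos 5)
The paper does not actually prove this statement; it is quoted from \cite[Theorem 1.1]{EdSa:17}. The intended argument is visible in this paper's proof of the $K$-theoretic analogue (Theorem \ref{thm.kpullbackinj}): by Edidin--Rydh's canonical reduction of stabilizers one replaces $\mc{X}$ by a smooth \emph{tame} Deligne--Mumford stack $\mc{X}'$ with smooth coarse space $X'$ proper and birational over $X$; Kimura-type envelope descent makes $A^*_{\op}(X)_{\QQ} \to A^*_{\op}(X')_{\QQ}$ injective, and for a tame DM stack the coarse-space pullback $A^*(X')_{\QQ} \to A^*(\mc{X}')_{\QQ}$ is an isomorphism. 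Your proposal takes a genuinely different route (base change to smooth test schemes, then stratification of the base), and it has a gap that I do not see how to close.

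The gap is the final induction. Your projection-formula computation over the dense open $U \subseteq W$ shows only that $\left((h^*c) \cap [W]\right)\big|_U = 0$, i.e.\ that the class $h^*c \in A^{k}(W) \cong A_{n-k}(W)$ is supported on the closed complement $Z = W \setminus U$. This is exactly the situation you flag yourself, and the proposed noetherian induction does not resolve it: the localization sequence tells you $(h^*c)\cap[W] = i_*\alpha$ for \emph{some} $\alpha \in A_*(Z)$, but $\alpha$ is neither canonical nor of the form $(h^*c) \cap [\overline{S}]$ for a stratum closure $\overline{S}$. Running the gerbe/pushforward computation on a deeper stratum kills $(h^*c)\cap[\overline{S}]$, which is not the error term you need to kill. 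Concretely, $h^*c$ could be the class of a divisor contained in $Z$; it pairs to zero with $[U]$ and with fundamental classes of all strata of small enough dimension, yet is nonzero. This is precisely why the actual proof does not argue stratum by stratum but instead modifies $\mc{X}$ birationally until the \emph{entire} good moduli space morphism becomes a proper coarse-space morphism, at which point the projection-formula argument applies globally. A secondary, also unresolved, point is the identity $(h^*c) \cap [\mc{W}] = 0$: since $\pi^*c$ is defined as the single class $c \cap [\mc{X}] \in A_*(\mc{X})$ and the base change $\mc{W} \to \mc{X}$ is neither flat nor lci (and $\mc{W}$ need not be smooth or equidimensional when $X$ is singular), there is no pullback on Chow groups transporting the vanishing of $c \cap [\mc{X}]$ to the class you use; the compatibility asserted in your second paragraph is not a formal consequence of \cite[Proposition 2.10]{EdSa:17}.
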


Since $\mc{X}(\Delta) \rightarrow
X(\Delta)$ is a good moduli space morphism, we conclude
that $A^*_{\op}(X(\Delta))_{\QQ}$ injects into
$A^*(\mc{X}(\Delta)_{\QQ}$.  If $\Delta$ is simplicial, then $\mc{X}(\Delta)$ is Deligne-Mumford and the inclusion
is an isomorphism.

\subsection{Chow Cohomology of Toric Varieties}

For toric varieties, the generators and relations of the Chow groups $A_k(X(\Delta))$ can be explicitly computed.  By \cite[Chapter 5.1]{Ful:93}, $A_k(X(\Delta))$ is generated by the classes $[V(\sigma)]$ where $\sigma$ has codimension $k$.  Furthermore, the group of relations between these generators is generated by all relations of the form:
\begin{center}
$\displaystyle [\divi(x^u)] = \sum_{\sigma} \left\langle u,n_{\sigma,\tau} \right\rangle [V(\sigma)]$,
\end{center}
where $\tau$ runs over all cones of codimension $k+1$ which are a face
of $\sigma$, $u$ runs over a generating set of $M(\tau)$, and
$n_{\sigma,\tau}$ is a rational point whose image generates the
lattice $N_{\sigma}/N_{\tau}$.  In the case that $X(\Delta)$ is
smooth, the above description can be used to compute
$A^*_{\op}(X(\Delta))$ since $A^k_{\op}(X(\Delta)) =
A_{n-k}(X(\Delta)$.  However when $X(\Delta)$ is not smooth, there are
only a few known techniques to compute $A^*_{\op}(X(\Delta))$.  In
particular, for complete toric varieties, the Chow cohomology ring can
also be derived from the above description of Chow groups by the
following theorem:

\begin{thm}
\cite[Proposition 1.4, Theorem 2.1]{FuSt:97} If $X$ is a complete toric variety, then 
$A^k_{\op}(X) \cong \Hom(A_k(X),\ZZ)$,
and the Chow cohomology is also isomorphic to the ring of Minkowski weights on $\Delta$.
\end{thm}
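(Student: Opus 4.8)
The plan is to decouple the statement into a purely combinatorial identification, $\Hom(A_k(X),\ZZ)\cong\{\text{codimension-}k\text{ Minkowski weights}\}$, and a geometric one, $A^k_{\op}(X)\cong\Hom(A_k(X),\ZZ)$. The first is essentially formal from the presentation of $A_k(X)$ recalled above. A homomorphism $A_k(X)\to\ZZ$ is the same datum as a function $c$ assigning an integer $c(\sigma)$ to each codimension-$k$ cone $\sigma$, namely its value on the generator $[V(\sigma)]$, subject to the vanishing of every relation $[\divi(x^u)]$. Substituting $[\divi(x^u)]=\sum_\sigma\langle u,n_{\sigma,\tau}\rangle[V(\sigma)]$, this vanishing becomes the balancing condition $\sum_\sigma\langle u,n_{\sigma,\tau}\rangle\,c(\sigma)=0$ for every codimension-$(k+1)$ cone $\tau$ and every $u\in M(\tau)$, which is exactly the definition of a Minkowski weight. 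The two groups therefore coincide tautologically, and I would dispose of this identification first.

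For the geometric isomorphism I would introduce the Kronecker-duality map. Since $X$ is complete there is a degree homomorphism $\deg\colon A_0(X)\to\ZZ$, and I define $\Phi\colon A^k_{\op}(X)\to\Hom(A_k(X),\ZZ)$ by $\Phi(c)(\alpha)=\deg(c\cap\alpha)$, where $c$ acts through the identity morphism $\id\colon X\to X$. Because operational classes respect rational equivalence, $\Phi(c)$ automatically annihilates the relations above and hence is a Minkowski weight, so $\Phi$ is well defined; the entire content of the theorem is that $\Phi$ is bijective.

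For surjectivity I would build an operational class directly out of a given Minkowski weight, prescribing its action on the invariant cycles of each $g\colon X'\to X$ by the combinatorial intersection data attached to the weight and then verifying the operational compatibilities with proper pushforward and flat and l.c.i.\ pullback; the toric structure makes these checks combinatorial. For injectivity, suppose $\Phi(c)=0$; I must show that $c$ kills $A_*(X')$ for \emph{every} $g\colon X'\to X$, not merely $A_*(X)$. Here I would anchor the argument at a smooth model: refining $\Delta$ to a smooth fan produces an equivariant resolution $f\colon\tilde X\to X$ with $\tilde X$ smooth and complete, where Poincar\'e duality gives $A^k_{\op}(\tilde X)=A^k(\tilde X)=A_{n-k}(\tilde X)$ and the intersection pairing $A_{n-k}(\tilde X)\times A_k(\tilde X)\to\ZZ$ is unimodular, perfectness following from the cellular (Bialynicki--Birula) structure of a smooth complete toric variety.

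I expect the injectivity direction to be the main obstacle, precisely because $A^k_{\op}(X)$ is defined through the action of $c$ on $A_*(X')$ for all $X'\to X$ at once, so pinning $c$ down from its action on $X$ alone is where the toric hypothesis is indispensable. The mechanism I would exploit is that every relevant $X'$ may be taken toric, its Chow groups are generated by invariant cycles that push forward to invariant cycles on $X$, and the projection formula together with the perfect pairing on $\tilde X$ reduces the vanishing of $c$ on $X'$ to $\Phi(c)=0$. Finally, to promote the group isomorphism to a ring isomorphism I would give the Minkowski weights the fan-displacement (stable intersection) product and check that $\Phi$ is multiplicative; matching the operational cup product with this combinatorial product is the second delicate step, which I would again handle by pulling back to the smooth resolution, where the product is ordinary intersection, and transporting the answer back down via $f_*$ and the projection formula.
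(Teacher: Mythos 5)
First, a point of comparison: the paper offers no proof of this statement at all --- it is quoted from Fulton--Sturmfels \cite{FuSt:97} (Proposition 1.4 and Theorem 2.1 there) as a known input, so the only thing to measure your sketch against is the source. Your combinatorial half is correct and is exactly FS Theorem 2.1: given the presentation of $A_k(X)$ by the invariant cycles $[V(\sigma)]$ modulo the relations $[\divi(x^u)]$, the identification of $\Hom(A_k(X),\ZZ)$ with codimension-$k$ Minkowski weights is tautological. Your injectivity argument also contains the right ingredients, though it is cleaner to run it through the envelope mechanism that this paper itself recalls: a toric resolution $f\colon\tilde X\to X$ is an envelope, so $A^k_{\op}(X)\hookrightarrow A^k_{\op}(\tilde X)=A_{n-k}(\tilde X)$ by Kimura's sequence; then for every $\beta\in A_k(\tilde X)$ the projection formula gives $\deg\bigl((c\cap[\tilde X])\cdot\beta\bigr)=\Phi(c)(f_*\beta)=0$, and unimodularity of the intersection pairing on the smooth complete toric $\tilde X$ forces $c\cap[\tilde X]=0$, hence $c=0$. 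You do not need to control the action of $c$ on every $X'\to X$ directly, which is fortunate, because those $X'$ need not be toric.

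The genuine gap is surjectivity. You propose to ``prescribe the action of a Minkowski weight on the invariant cycles of each $g\colon X'\to X$,'' but the test objects $X'$ in the definition of an operational class are \emph{arbitrary} schemes over $X$; they carry no torus action and have no invariant cycles, so your prescription does not define the operators $c_g$, and the subsequent claim that ``every relevant $X'$ may be taken toric'' is not available at this stage. The actual route (Kimura, and Fulton--MacPherson--Sottile--Sturmfels, as used in \cite{FuSt:97}) is descent along the envelope: lift $\phi\in\Hom(A_k(X),\ZZ)$ to $\phi\circ f_*$ on $\tilde X$, realize it as a class $c'\in A_{n-k}(\tilde X)=A^k_{\op}(\tilde X)$ by Poincar\'e duality and perfectness of the pairing, and then show $c'$ descends, i.e.\ lies in the image of $A^k_{\op}(X)\to A^k_{\op}(\tilde X)$, by checking that its two pullbacks to $\tilde X\times_X\tilde X$ agree; this last step requires the injectivity statement for the lower-dimensional components of $\tilde X\times_X\tilde X$ and hence an induction on dimension. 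None of this appears in your sketch. The same optimism affects your final step: that the operational cup product corresponds to the fan-displacement product on Minkowski weights is FS Theorem 3.2, a separate theorem proved by a generic-translation (moving) argument in the torus, not a consequence of the projection formula on a resolution.
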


However, for non-complete toric varieties this isomorphism fails as the following example demonstrates.

\begin{exa} \label{exa.opnothom}
  Let $\sigma$ denote the cone generated by $\{(1,0,1),(0,-1,1),(-1,0,1),(0,1,1)\}$ in $\RR^3$ as seen above.  One can compute that $A_2(X(\sigma)) = \ZZ/2 \oplus \ZZ$ which does admit non-trivial morphisms $A_2(X(\sigma)) \rightarrow \ZZ$.  However, by Theorem \ref{thm.Chowcoh}, $A^2_{\op}(X(\sigma)) = 0$. 
\end{exa}

\subsubsection{Computing Chow Cohomology: Kimura's Exact Sequence}

There is a second technique to compute the Chow cohomology of a toric
variety using Kimura's exact sequence.  This sequences injects
$A^*_{\op}(X(\Delta))$ into the Chow cohomology of any envelope of
$X(\Delta)$.  Recall, a morphism $X'\rightarrow X$ is an envelope if
it is proper and for every closed subvariety $V\subset X$, there
exists a closed subvariety $V' \subset X'$ mapping onto $V$ such that
the restriction $V'\rightarrow V$ is birational.

\begin{prop}\cite[Theorem 2.3]{Kim:92} Let $X'\rightarrow X$ be an envelope which is an isomorphism outside of $S\subset X$ and $E \subset X'$, then

  \begin{equation} \label{eq.kimuraseq}
0\rightarrow A^*_{\op}(X) \rightarrow A^*_{\op}(X') \oplus A^*_{\op}(S) \rightarrow A^*_{\op}(E)
\end{equation}
is exact.
\end{prop}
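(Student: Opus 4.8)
The plan is to obtain \eqref{eq.kimuraseq} by dualizing the Chow-homology descent sequence attached to the given square. Write $i\colon S\to X$ and $f\colon X'\to X$ for the two maps, set $E=f^{-1}(S)$, and let $j\colon E\to X'$ and $g\colon E\to S$ be the induced maps, so that $f\circ j=i\circ g$. The first arrow of \eqref{eq.kimuraseq} is $c\mapsto (f^*c,\,i^*c)$ and the second is $(a,b)\mapsto j^*a-g^*b$, all arrows being the pullbacks on operational classes. The single essential geometric input I would isolate first is that, because $f$ is an envelope which is an isomorphism over $X\setminus S$, the square is an \emph{abstract blowup} and the induced sequence of Chow homology groups
\[
A_*(E)\xrightarrow{\ (j_*,\,-g_*)\ } A_*(X')\oplus A_*(S)\xrightarrow{\ f_*+i_*\ } A_*(X)\to 0
\]
is exact and remains exact after base change along any $h\colon Y\to X$. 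This is the technical heart of Kimura's envelope theory, proved by Noetherian induction on $\dim X$ together with Chow's lemma and the defining birationality property of envelopes; I would establish (or cite) it for every base change $Y\to X$, writing $Y'=Y\times_X X'$, $Y_S=Y\times_X S$, $Y_E=Y\times_X E$ and decorating the four maps with a subscript $Y$.

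Granting this, exactness on the left is immediate. If $f^*c=0$, then for any $h\colon Y\to X$ and any $\alpha\in A_*(Y)$ the base change $f_Y\colon Y'\to Y$ is again an envelope, so $\alpha=(f_Y)_*\beta$ for some $\beta$, whence $c\cap\alpha=(f_Y)_*\!\left(f^*c\cap\beta\right)=0$ by compatibility of operational classes with proper pushforward; thus $c=0$ and the first map is injective. That the composite vanishes is formal, since $j^*f^*c-g^*i^*c=(fj)^*c-(ig)^*c=0$.

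The substance is exactness in the middle, and this is where I expect the real work. Given $(a,b)$ with $j^*a=g^*b$, I would \emph{construct} a class $c\in A^*_{\op}(X)$ by descent: for $h\colon Y\to X$ and $\alpha\in A_*(Y)$, choose a lift $\alpha=(f_Y)_*\beta+(i_Y)_*\gamma$ afforded by the homology sequence above and set
\[
c\cap\alpha:=(f_Y)_*(a\cap\beta)+(i_Y)_*(b\cap\gamma).
\]
Well-definedness is the crux: two lifts differ by $\bigl(j_{Y*}\delta,\,-g_{Y*}\delta\bigr)$ for some $\delta\in A_*(Y_E)$, and the projection formula together with the hypothesis $j^*a=g^*b$ collapses the difference to $e_{Y*}\bigl((j^*a-g^*b)\cap\delta\bigr)=0$, where $e_Y\colon Y_E\to Y$ is the common composite $f_Yj_Y=i_Yg_Y$. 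I would then verify that $c$ is genuinely operational---compatibility with proper pushforward and with flat and l.c.i.\ pullback---which follows from the corresponding properties of $a$ and $b$ and the functoriality of the descent sequence under base change. Finally, evaluating on $Y\to X'$ respectively $Y\to S$ using the canonical sections of $f_Y$ respectively $i_Y$ (available because $h$ factors through $f$ respectively $i$) recovers $f^*c=a$ and $i^*c=b$, exhibiting a preimage of $(a,b)$ and completing exactness. The main obstacle is thus twofold: securing the base-changeable homology descent sequence, which is a genuine theorem about envelopes rather than a formality, and checking that the descent-defined $c$ satisfies all of the axioms of an operational class.
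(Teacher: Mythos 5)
The paper offers no proof of this proposition: it is quoted verbatim from Kimura \cite[Theorem 2.3]{Kim:92}, so there is no in-paper argument to compare against. Your reconstruction is, in substance, Kimura's own proof, and it is correct. The formal parts all check: injectivity of $c\mapsto(f^*c,i^*c)$ from surjectivity of $(f_Y)_*$ for every base change $Y\to X$ together with compatibility of operational classes with proper pushforward; vanishing of the composite from $f\circ j=i\circ g$; the descent construction of $c$ from a pair $(a,b)$ with $j^*a=g^*b$, with well-definedness reduced via the projection formula to $e_{Y*}\bigl((j^*a-g^*b)\cap\delta\bigr)=0$; and the recovery of $f^*c=a$, $i^*c=b$ using the canonical sections of $f_Y$ and $i_Y$ when the test map factors through $X'$ or $S$. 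The one ingredient you defer rather than prove --- exactness of
\[
A_*(Y_E)\to A_*(Y')\oplus A_*(Y_S)\to A_*(Y)\to 0
\]
for every base change of the square --- is indeed the entire technical content; it is Kimura's Theorems 1.8 and 2.1, proved by Noetherian induction on dimension using the birational lifting property in the definition of an envelope, and it is exactly why the integral statement requires an envelope rather than merely a proper surjection (for which one only gets the rational version, as the paper notes after the proposition). Since you flag this explicitly and would cite or reprove it, I see no gap; just be aware that without that input the middle-exactness argument has nothing to stand on, so in a self-contained write-up that lemma is where the length would go.
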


If $X' \to X$ is proper and surjective, then the above sequence is
exact after tensoring with $\QQ$.  Payne proved 
\cite[Lemma 1]{Pay:06} that any proper 
birational, toric morphism is an envelope.  Hence, if
$\Delta'$ is any refinement of $\Delta$, then $A^*_{\op}(X(\Delta))$
injects into $A^*_{\op}(X(\Delta'))$.  We make  essential use of this fact
in our proof of Theorem \ref{thm.Chowcoh}

\subsection{Chow Rings of Canonical Toric Stacks}
The Chow ring of $\mc{X}(\Delta)$ admits a simple presentation
using equivariant Chow groups.
Following \cite[Section 2]{EdCo:18}, let $n =
|\Delta(1)|$, $V = \A^{n}$, $U = V \setminus  Z(\Delta)$, $G$ the
diagonalizable group such that $\mc{X}(\Delta) = [U/G]$ and $X(G)$ the
character group of $G$.  The excision exact sequence
(\cite[Prop. 1.8]{Ful:84}) for equivariant Chow groups implies
$A^*(\mc{X}(\Delta))$ is a quotient of $A^*_G(V) = A^*_G(pt)$; the
latter $A^*_G(pt)$ will be denoted by $A^*_G$.  By \cite{EdGr:98},
$A^*_G \cong \ZZ[X(G)]$ where the latter is the symmetric algebra of
the character group of $G$.  By construction of $\mc{X}(\Delta)$, the
action of $G$ on $V$ is faithful; hence, $G$ injects into $\GG^n_m$
and induces a surjection of character groups $X(\GG^n_m) \rightarrow
X(G)$. This surjection further induces a surjection of equivariant
Chow rings $A^*_{\GG^n_m} \rightarrow A^*_G$ leading us to the
following definition:

\begin{defn}
The $\tbf{linear equivalence ideal}$ of $\Delta$ is defined to be:
\begin{center}
$\displaystyle L(\Delta) := \ker(\A^*_{\GG^n_m} \rightarrow A^*_G)$.
\end{center}
Equivalently, it is the linear ideal generated by the relations among the image of $X(\GG^n_m)$ in $X(G)$.  Hence, $A^*_G = A^*_{\GG^n_m}/L(\Delta)$.
\end{defn}

Suppose $L_1,...,L_k$ are the irreducible components of $Z(\Delta)$,
then by
\cite[Propositions 2.1 and 2.2]{EdCo:18}, $A^*_G(U)$ is the quotient
of $A^*_{\GG^n_m}$ modulo the linear equivalence ideal and the ideal
generated by the equivariant fundamental classes of $L_i$.

\begin{defn}
The $\tbf{Stanley-Reisner ideal}$ of $\Delta$ is the ideal $\mc{Z}(\Delta)$ generated by the equivariant fundamental classes of the irreducible components of $Z(\Delta)$.
\end{defn}

Thus, by \cite[Propositions 2.1 and 2.2]{EdCo:18}, the Chow ring of $\mc{X}(\Delta)$ has the following presentation:

\begin{thm} \label{thm.chows-r}
Under the above notation,
\begin{center}
$\displaystyle A^*(\mc{X}(\Delta)) \cong A^*_G/\mc{Z}(\Delta) \cong A^*_{\GG^n_m}/(L(\Delta),\mc{Z}(\Delta))$.
\end{center}
\end{thm}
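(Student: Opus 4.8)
The plan is to reduce the computation of $A^*(\mc{X}(\Delta))$ to an equivariant excision calculation and then to identify the two resulting ideals explicitly. Since $\mc{X}(\Delta) = [U/G]$ is a smooth quotient stack, the identification of \cite{EdGr:98} gives $A^*(\mc{X}(\Delta)) \cong A^*_G(U)$, so it suffices to compute the $G$-equivariant Chow ring of $U = V \setminus Z(\Delta)$ inside $V = \A^n$. Applying the equivariant version of the excision sequence \cite[Prop. 1.8]{Ful:84} to the closed $G$-subscheme $Z := Z(\Delta)$ with open complement $U$ produces a right-exact sequence $A_*^G(Z) \to A_*^G(V) \to A_*^G(U) \to 0$ whose first arrow is proper pushforward. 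Because $V = \A^n$ carries a linear $G$-action, homotopy invariance identifies $A^*_G(V)$ with $A^*_G(pt) = A^*_G$. Thus $A^*_G(U)$ is the quotient of $A^*_G$ by the image of $A_*^G(Z)$, and by the projection formula this image is an ideal.

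Next I would identify that image with the Stanley-Reisner ideal $\mc{Z}(\Delta)$. By \cite[Proposition 5.1.6]{CLS:11} the irreducible components $L_1,\dots,L_k$ of $Z$ are exactly the coordinate linear subspaces $\VV((x_{\rho_i})_{i\in I})$ attached to primitive collections $I$; in particular each $L_i$ is a $G$-invariant affine space. Homotopy invariance gives $A^*_G(L_i) \cong A^*_G$, and the pushforward $A^*_G(L_i) \to A^*_G(V)$ sends $1$ to the equivariant fundamental class $[L_i]$, so the image of each component is the principal ideal $([L_i])$. To pass from the individual components to the union $Z = \bigcup_i L_i$, I would induct on $k$ using the (equivariant) Mayer-Vietoris sequence for the closed decomposition $Z = L_1 \cup (L_2 \cup \cdots \cup L_k)$, noting that all intersections among the $L_i$ are again coordinate linear subspaces, hence have equivariant Chow groups generated by fundamental classes. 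The upshot is that the image of $A_*^G(Z)$ is precisely the ideal generated by the $[L_i]$, namely $\mc{Z}(\Delta)$, giving $A^*(\mc{X}(\Delta)) \cong A^*_G/\mc{Z}(\Delta)$.

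For the final isomorphism, faithfulness of the $G$-action on $V$ embeds $G \hookrightarrow \GG^n_m$ and induces the surjection $A^*_{\GG^n_m} \to A^*_G$ whose kernel is the linear equivalence ideal $L(\Delta)$, so $A^*_G = A^*_{\GG^n_m}/L(\Delta)$. Under this surjection each fundamental class $[L_i]$ lifts to the monomial $\prod_{i\in I} t_{\rho_i}$ generating the Stanley-Reisner ideal in $A^*_{\GG^n_m}$; combining the two quotients yields $A^*(\mc{X}(\Delta)) \cong A^*_{\GG^n_m}/(L(\Delta),\mc{Z}(\Delta))$.

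The main obstacle is the second step: verifying that the image of $A_*^G(Z)$ imposes no relations beyond the generators $[L_i]$, i.e. that the equivariant Chow groups of the reducible scheme $Z$ are spanned by fundamental classes of its linear strata and push forward onto exactly the ideal those classes generate. This is where the linearity of every stratum is essential, and it is precisely the content of \cite[Propositions 2.1 and 2.2]{EdCo:18}; the remaining steps are formal consequences of excision, homotopy invariance, and the Edidin-Graham description of Chow rings of quotient stacks.
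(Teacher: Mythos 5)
Your proposal is correct and follows essentially the same route as the paper, which sets up the identification $A^*(\mc{X}(\Delta)) \cong A^*_G(U)$ via \cite{EdGr:98}, invokes the equivariant excision sequence to realize this as a quotient of $A^*_G(V) = A^*_G$, and then cites \cite[Propositions 2.1 and 2.2]{EdCo:18} for the identification of the relations with $L(\Delta)$ and $\mc{Z}(\Delta)$. The only difference is that you unwind the content of those cited propositions (the Mayer--Vietoris induction showing the image of $A_*^G(Z)$ is exactly the ideal generated by the classes $[L_i]$), which you correctly flag as the crux; this is a sound elaboration rather than a different argument.
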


Each of the terms in the above presentation can be explicitly
computed.  Let $e_1,...,e_n$ denote the canonical basis characters in
$X(\GG^n_m)$, and let $t_1,...,t_n$ denote the corresponding first
Chern classes of the associated 1-dimensional representations of each
character, then $A^*_{\GG^n_m} = \ZZ[t_1,...,t_n]$.  For the linear
equivalence ideal, let $v_1,...,v_k$ be a basis for $\ker(X(\GG^n_m)
\rightarrow X(G))$, then $L(\Delta)$ is the ideal generated by
corresponding first Chern classes of $v_1,...,v_k$: if $v_i = \sum_j
a_{ij}e_j$, then $\sum_j a_{ij}t_j \in L(\Delta)$.  For the
Stanley-Reisner ideal, suppose $L_i = V(x_{i_1},...,x_{i_l})$ where
$x_1,...,x_n$ are coordinates for $V$, then the equivariant
fundamental class of $L_i$ is $t_{i_1}\cdot ... \cdot t_{i_l}$.

\begin{rem}
Since we know the irreducible components of $Z(\Delta)$ correspond to
the primitive collections of $\Delta$, $\mc{Z}(\Delta)$
is equivalently generated by the monomials corresponding to primitive
collections.  Explicitly, if $\{ \rho_{i_1},...,
\rho_{i_k} \}$ is a primitive collection corresponding to an irreducible
component of $Z(\Delta)$, 
then the monomial $t_{i_1}
\cdot ... \cdot t_{i_k}$ is the
equivariant fundamental class of this component.
\end{rem}

\begin{exa}
Let $\sigma$ be the cone in the previous examples, then since
$X(\sigma)$ is affine, $Z(\sigma) = 0$.  Furthermore, $L(\sigma)$ is
generated by the relations $(s_1-s_3,s_2-s_4,s_1+...+s_4)$.  Hence,
$A^*(\mc{X}(\sigma))$ is a polynomial ring in one variable.
\end{exa}

\begin{exa} \label{exa.condivided}
Suppose we star subdivide $\sigma$ to produce the fan $\sigma^*$.  The
Chow cohomology of the canonical stack of $\sigma^*$ has the following
presentation: the Stanley-Reisener ideal is the ideal
$(s_1s_3,s_2s_4)$ since $\{\rho_1, \rho_3\}$ and $\{\rho_2, \rho_4\}$
are the primitive collections of $\sigma^*$.  Furthermore, the linear
equivalence ideal is defined by the equations
$(s_1,s_3,s_2-s_4,s_1+...+s_5)$.  Hence, $A^*(\mc{X}(\sigma^*)) =
\ZZ[s_1,s_2]/(s_1^2,s_2^2)$.
\end{exa}

\section{Chow Cohomology of Affine Toric Varieties}

In this section, we will establish our main result, the vanishing of
the Chow cohomology of an affine toric variety.  

\begin{thm} \label{thm.Chowcoh}
For any affine toric variety $X(\sigma)$,
\begin{center}
$\displaystyle A^k_{\op}(X(\sigma)) = 0$ for $k > 0$.
\end{center}
In particular, $A^*_{\op}(X(\sigma)) = \ZZ$.
\end{thm}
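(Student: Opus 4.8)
The plan is to deduce the vanishing from Kimura's sequence by reducing it to an injectivity statement, and then to verify that injectivity on the canonical stacks, where the Chow rings are computable.

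First I would apply Kimura's sequence \eqref{eq.kimuraseq} to the star subdivision blowup $\varphi\colon X(\sigma^*)\to X(\sigma)$ at $v=\Star(\sigma)$. This is a proper birational toric morphism, hence an envelope by Payne's result, and it is an isomorphism over the complement of the orbit $S=O(\sigma)$, with exceptional locus $E=V(\rho_v)$. When $\sigma$ is full dimensional $S$ is the single distinguished point $\gamma_\sigma$, and in general $S\cong(\GG_m)^{n-\dim\sigma}$; in either case $S$ is smooth with $A^k_{\op}(S)=A^k(S)=A_{\dim S-k}(S)=0$ for $k>0$. Hence for $k>0$ the sequence collapses to
\[
0\to A^k_{\op}(X(\sigma))\to A^k_{\op}(X(\sigma^*))\xrightarrow{\;i^*\;}A^k_{\op}(E),
\]
where $i\colon E\hookrightarrow X(\sigma^*)$ is the inclusion. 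Thus $A^k_{\op}(X(\sigma))=\ker(i^*)$, and it suffices to prove that $i^*$ is injective.

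Next I would establish this injectivity after tensoring with $\QQ$ by transporting it to the canonical stacks. Let $j\colon\mc E\hookrightarrow\X(\sigma^*)$ realize the exceptional divisor: in the Cox presentation $\X(\sigma^*)=[U/G]$ it is the substack $\{x_v=0\}$, which one checks is the canonical stack of $E$ and lies over $i$. Since $\X(\sigma^*)\to X(\sigma^*)$ and $\mc E\to E$ are good moduli space morphisms, the pullbacks $\pi^*$ and $\pi_E^*$ are injective over $\QQ$ by the Edidin--Satriano injectivity theorem, and compatibility of the operational cap product with $j$ yields a commutative square
\begin{center}
\begin{tikzcd}
A^k_{\op}(X(\sigma^*))_\QQ \arrow[r, "i^*"] \arrow[d, "\pi^*"']
& A^k_{\op}(E)_\QQ \arrow[d, "\pi_E^*"] \\
A^k(\X(\sigma^*))_\QQ \arrow[r, "j^*"]
& A^k(\mc E)_\QQ.
\end{tikzcd}
\end{center}
Because the vertical maps are injective, injectivity of $i^*$ follows once $j^*$ is shown to be injective.

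The heart of the proof, and the step I expect to be the main obstacle, is to compute $j^*$ and prove that it is in fact an isomorphism. Using Theorem \ref{thm.chows-r} I would write $A^*(\X(\sigma^*))=A^*_{\GG^{m+1}_m}/(L(\sigma^*),\mc Z(\sigma^*))$, where the rays of $\sigma^*$ are the generators $u_1,\dots,u_m$ of $\sigma$ together with $v$. Since $v$ lies in the relative interior of $\sigma$, the star of $\rho_v$ projects to a complete fan in $N_\sigma/\RR v$; thus $E$ is a complete toric variety (times a torus when $\sigma$ is not full dimensional) whose rays are the images $\bar u_i$ of the $u_i$, and I would describe $A^*(\mc E)$ by the same recipe. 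The map $j^*$ sends $t_i\mapsto\bar t_i$, while $t_v$ is governed by the linear relations $\sum_i\langle\chi,u_i\rangle t_i+\langle\chi,v\rangle t_v$. The real work is the combinatorial matching of the two presentations: checking that the primitive collections of $\sigma^*$ restrict exactly to those of the star fan, so that the Stanley--Reisner ideals correspond, and that $L(\sigma^*)$ maps onto $L(E)$, so that $j^*$ is bijective. Verifying that the substack $\{x_v=0\}$ genuinely is the canonical stack $\mc E$, so that the square above commutes, is the other technical point requiring care.

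With $j^*$ an isomorphism, the square gives $A^k_{\op}(X(\sigma))_\QQ=0$ for $k>0$. To remove the rational coefficients I would invoke the result of Hausel and Sturmfels to conclude that $A^k_{\op}(X(\sigma))$ is torsion free; a torsion-free group that vanishes rationally is zero, so $A^k_{\op}(X(\sigma))=0$ for $k>0$, and therefore $A^*_{\op}(X(\sigma))=\ZZ$.
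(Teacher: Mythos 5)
Your proposal is correct and follows essentially the same route as the paper: Kimura's sequence for the star subdivision, transfer to canonical stacks via the Edidin--Satriano injectivity theorem, an isomorphism of the stacks' Chow rings, and the Hausel--Sturmfels torsion-freeness argument to pass from $\QQ$ to $\ZZ$. The only notable difference is at the step you flag as the main obstacle: rather than matching the two Stanley--Reisner presentations combinatorially, the paper observes that no primitive collection of $\sigma^*$ contains $\rho_v$, so $\mc{E}\hookrightarrow\X(\sigma^*)$ is the zero section of a $G$-equivariant line bundle over $\A^n\setminus Z'$ and the pullback is an isomorphism by homotopy invariance.
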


To slightly simplify the notation we will assume that the cone is full
dimensional. If $\sigma$ is not full dimensional then $X(\sigma) = T
\times X(\overline{\sigma})$ where $T$ is a torus and
$\overline{\sigma}$ is a full-dimensional cone and our
full-dimensional proof readily adapts to this case.

The proof will be divided into the following subsections.

\subsection{Kimura's Exact Sequence}

Let $\sigma^*$ denote the star subdivision of $\sigma$ with respect to
its star, $v = \Star(\sigma)$, $\rho_v$ the corresponding ray in
$\sigma^*$, and $\phi:X(\sigma^*) \rightarrow X(\sigma)$ the
associated toric morphism.  Note, this morphism is an isomorphism
outside of $S:= V(\sigma)$ and $E := \phi^{-1}(S)$.

\begin{lem}
Under the above notation, $E = V(\rho_v)$.
\end{lem}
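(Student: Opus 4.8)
The plan is to identify the exceptional set $E = \phi^{-1}(S)$ directly using the orbit-cone correspondence together with Theorem \ref{thm.inverseimage} applied to the refinement $\sigma^* \to \sigma$. Since $\phi : X(\sigma^*) \to X(\sigma)$ is induced by the identity map of lattices $N \to N$, the fan $\sigma^*$ is a refinement of $\sigma$, and $S = V(\sigma)$ is the orbit closure corresponding to the maximal cone $\sigma$ itself. By Theorem \ref{thm.inverseimage}, the irreducible components of $\phi^{-1}(V(\sigma))$ are the orbit closures $V(\sigma_i)$, where the $\sigma_i$ are the minimal cones of $\sigma^*$ whose relative interior meets the relative interior of $\sigma$.

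The main step is therefore a purely combinatorial computation: determine which cones of the star subdivision $\sigma^*$ have relative interior intersecting $\mathrm{relint}(\sigma)$, and check that the unique minimal such cone is the ray $\rho_v$ generated by $v = \Star(\sigma)$. First I would recall that $v = \sum_i u_i$ is a positive combination of the primitive ray generators of $\sigma$, so $v$ lies in the relative interior of $\sigma$, and hence the ray $\rho_v$ does as well (away from the origin). Thus $\rho_v$ is among the $\sigma_i$. Next I would argue it is the minimal one: every maximal cone of $\sigma^*$ has the form $\cone(\mu, v)$ for a facet $\mu \subsetneq \sigma$, so $v$ is a vertex of each such cone, and the cones of $\sigma^*$ whose relative interior meets $\mathrm{relint}(\sigma)$ are precisely those containing $v$ in their span in the appropriate sense. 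Any proper face of $\sigma^*$ not containing $\rho_v$ is contained in the boundary $\partial\sigma$ (it is a face $\mu$ of the original $\sigma$), whose relative interior is disjoint from $\mathrm{relint}(\sigma)$. Hence $\rho_v$ is the unique minimal cone of $\sigma^*$ meeting $\mathrm{relint}(\sigma)$, giving $\phi^{-1}(S) = V(\rho_v)$ as the single irreducible component.

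The hard part will be the careful geometric verification that no cone of $\sigma^*$ smaller than or incomparable to $\rho_v$ meets $\mathrm{relint}(\sigma)$ — that is, confirming that every cone of $\sigma^*$ not containing $\rho_v$ lies entirely in the boundary $\partial\sigma$. This reduces to understanding the face structure of the star subdivision: a cone of $\sigma^*$ either contains the new ray $\rho_v$ or is a cone of the original boundary triangulation, and the latter is contained in some proper face $\mu$ of $\sigma$. Since $\mathrm{relint}(\sigma) \cap \mu = \emptyset$ for any proper face $\mu$, only cones containing $\rho_v$ qualify, and $\rho_v$ is minimal among these. I expect this to follow cleanly from \cite[Definition 3.3.13]{CLS:11} and the defining description of the maximal cones of $\sigma^*$ as $\cone(\mu, \Star(\sigma))$, so that the claim $E = V(\rho_v)$ is immediate once the minimality of $\rho_v$ is established.
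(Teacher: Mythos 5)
Your proposal is correct and follows essentially the same route as the paper: both arguments observe that $v = \Star(\sigma)$ lies in the relative interior of $\sigma$, that any cone of $\sigma^*$ meeting $\mathrm{relint}(\sigma)$ must contain $\rho_v$ (since the cones not containing $\rho_v$ lie in the boundary of $\sigma$), and then conclude via Theorem \ref{thm.inverseimage} that $\rho_v$ is the unique minimal such cone, so $E = V(\rho_v)$. The paper's version is just a terser statement of the same combinatorial argument.
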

\begin{proof}
By construction, $\rho_v$ is contained in the interior of $\sigma$,
and if $\tau$ is a cone of $\sigma^*$ non-trivially intersecting the
interior of $\sigma$, then $\tau$ necessarily contains $\rho_v$.
Hence, $\rho_v$ is the unique minimal cone of $\sigma^*$ intersecting
the interior of $\sigma$.  The Lemma then follows from the description of
Theorem \ref{thm.inverseimage}.
\end{proof}

Furthermore, $S$ is $0$-dimensional and irreducible, hence, $A^0(S) = \ZZ$ and $A^k(S) = 0$ for $k > 0$.  Therefore, if we apply Kimura's exact sequence to $\phi$, we obtain the following exact sequence:

\begin{center}
$\displaystyle 0 \rightarrow A^k_{\op}(X(\sigma)) \rightarrow A^k_{\op}(X(\sigma^*)) \rightarrow A^*_{\op}(V(\rho_v))$ for $k > 0$.
\end{center}

Thus, to show $A^k_{\op}(X(\sigma)) = 0$ for $k > 0$, it suffices to
show $A^k_{\op}(X(\sigma^*)) \rightarrow A^*_{\op}(V(\rho_v))$ is
injective.  To accomplish this, Steps 2 and 3 will further demonstrate
that it suffices to consider the Chow rings of the associated
canonical stacks. In Step 4 we will show that these Chow rings are in fact isomorphic.

\subsection{A Commutative Diagram of Canonical Stacks} \label{sec.excep}

Let $\mc{X}(\sigma^*) \rightarrow X(\sigma^*)$ and $\mc{E} \rightarrow E$ denote the canonical stacks of $X(\sigma^*)$ and $E$ respectively, and let $E \rightarrow X(\sigma^*)$ denote the inclusion, then the goal of this subsection is to show that we can construct a closed embedding $\mc{E} \rightarrow \mc{X}(\sigma^*)$ such that the following diagram of canonical stacks and moduli spaces is commutative:

\begin{equation}\label{diag.canonical}
\begin{tikzcd} 
\mc{E} \arrow[r] \arrow[d] & \mc{X}(\sigma^*) \arrow[d] \\
E \arrow[r] & X(\sigma^*)
\end{tikzcd}
\end{equation}

Let $\Sigma_{\sigma^*}$ denote the fan of
$\A^{|\sigma^*(1)|} \setminus Z(\sigma^*)$. It is equipped with a
morphism $X(\Sigma_{\sigma^*}) \rightarrow X(\sigma^*)$ as in the
canonical stack construction.  If $n = |\sigma(1)|$, let
$x_1,...,x_{n+1}$ be coordinates on $\A^{|\sigma^*(1)|}$ such that
$x_{n+1}$ corresponds to the ray $\rho_v$.  By Remark
\ref{rem.orbitclosures} we have a commutative diagram of $G$-quotients

\begin{equation} \label{diag.cart}
\begin{tikzcd}
(\VV(x_{n+1})\setminus Z) \arrow[r] \arrow[d]
& X(\Sigma_{\sigma}) \arrow[d] \\
E \arrow[r]
& X(\sigma^*)
\end{tikzcd}
\end{equation}
where $G$ is the group acting on $X(\Sigma_{\sigma^*})$ such that
$\mc{X}(\sigma^*) = [X(\Sigma_{\sigma^*})/G]$. We will
show that $[(\VV(x_{n+1}) \setminus Z/G)]$ is the canonical stack
of the toric variety $E$.

We begin
with the following basic result of toric varieties:

\begin{lem}
A collection of rays of $\sigma^*$ is primitive if and only if the corresponding collection in $\Star(\rho_v)$ is primitive.
\end{lem}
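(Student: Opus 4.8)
The plan is to reduce the statement to a single combinatorial equivalence about when a set of original rays lies in a common cone. First I would fix the precise description of the two fans involved. The rays of $\sigma^*$ are the original rays $\rho_1,\dots,\rho_n$ of $\sigma$ together with the new ray $\rho_v$, and every cone of $\sigma^*$ is either a proper face of $\sigma$ or a cone of the form $\cone(\tau,v)$ with $\tau$ a proper face of $\sigma$ (the maximal cones being $\cone(\mu,v)$ for $\mu$ a facet of $\sigma$). The fan $\Star(\rho_v)$ is, by definition, the star fan of the ray $\rho_v$, i.e. the fan of the orbit closure $E=V(\rho_v)$ in $N(\rho_v)=N/N_{\rho_v}$, whose cones are the images $\overline{\tau}$ of the cones $\tau\in\sigma^*$ containing $\rho_v$ (\cite[Proposition 3.2.7]{CLS:11}). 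Since the cones of $\sigma^*$ containing $\rho_v$ are exactly the $\cone(\tau,v)$ with $\tau$ a proper face of $\sigma$, and $\cone(\tau,v)$ has image $\overline{\tau}$ under $N_\RR\to N(\rho_v)_\RR$, the rays of $\Star(\rho_v)$ are precisely the images $\overline{\rho_i}$ of the $2$-dimensional cones $\cone(\rho_i,v)$. This produces the bijection $\rho_i\leftrightarrow\overline{\rho_i}$ between the rays of $\sigma^*$ other than $\rho_v$ and the rays of $\Star(\rho_v)$ that is implicit in the statement.

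The heart of the argument is the following equivalence, which I would establish next: for a collection $T$ of original rays, the conditions (i) $T$ lies in a single cone of $\sigma^*$, (ii) $T$ lies in a single proper face of $\sigma$, and (iii) $\overline{T}$ lies in a single cone of $\Star(\rho_v)$ are all equivalent. The implication (ii) $\Rightarrow$ (i) is immediate since proper faces of $\sigma$ are cones of $\sigma^*$. For (i) $\Rightarrow$ (ii), if $T$ lies in a cone $C$ of $\sigma^*$ then each $\rho_i\in T$, being a ray of the fan contained in $C$, is an extreme ray of $C$; writing $C$ either as a proper face of $\sigma$ or as $\cone(\tau,v)$ and using $\rho_i\neq\rho_v$ forces $T$ into a proper face $\tau$ of $\sigma$. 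The equivalence of (ii) and (iii) follows from the cone description of $\Star(\rho_v)$, since $\overline{T}\subseteq\overline{\tau}$ exactly when $T\subseteq\tau$.

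Before concluding I would rule out the new ray: no primitive collection of $\sigma^*$ contains $\rho_v$. Indeed, if a primitive collection $C$ contained $\rho_v$, then the proper subset $C\setminus\{\rho_v\}$ would lie in a common cone, hence, by the equivalence above, in a proper face $\tau$ of $\sigma$; but then $\cone(\tau,v)$ is a cone of $\sigma^*$ containing all of $C$, contradicting that $C$ is not contained in a single cone. Thus every primitive collection of $\sigma^*$ consists of original rays, so the bijection $\rho_i\leftrightarrow\overline{\rho_i}$ indeed applies to it.

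Finally, I would assemble the pieces. Primitivity is defined entirely through the relation ``is contained in a single cone'': $T$ is primitive exactly when $T$ is not contained in a single cone while every proper subset of $T$ is. By the equivalence (i) $\Leftrightarrow$ (iii), applied simultaneously to $T$ and to each of its proper subsets, this relation agrees for a collection $T$ of original rays in $\sigma^*$ and for its image $\overline{T}$ in $\Star(\rho_v)$. Hence $T$ is primitive in $\sigma^*$ if and only if $\overline{T}$ is primitive in $\Star(\rho_v)$, which is the assertion. The main obstacle I anticipate is the bookkeeping in the equivalence of the second step when $\sigma$ is not simplicial: I must verify carefully that the only cones of $\sigma^*$ are the proper faces of $\sigma$ and the cones $\cone(\tau,v)$, and that the cones of $\Star(\rho_v)$ are exactly their images. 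Both facts are furnished by the standard star-subdivision and orbit-closure-fan descriptions, so the remaining work is purely combinatorial.
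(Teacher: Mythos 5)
Your proof is correct and follows essentially the same route as the paper's: both reduce primitivity to the transfer of the relation ``contained in a single cone'' between $\sigma^*$ and $\Star(\rho_v)$, using the correspondence between proper faces $\tau$ of $\sigma$, the cones $\cone(\tau,v)$ of $\sigma^*$, and their images in the star fan. Your extra step checking that no primitive collection of $\sigma^*$ contains $\rho_v$ makes explicit a point the paper leaves implicit, but it does not change the approach.
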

\begin{proof}
Let $C$ be a primitive collection of $\sigma^*$ and and let $C'$ be
the associated collection in $\Star(\rho_v)$.  If $D' \subset C'$ is a
proper subset and $D \subset C$ denotes the corresponding subset, then
since $D$ is contained in some cone, it follows that $D'$ will be
contained in the image of that cone, and similarly, if $C' \subset
\ol{\tau}(1)$, then $C \subset \tau$ would contradict $C$ being
primitive.  Thus, if $C$ is primitive, then $C'$ is primitive; the
converse holds by a similar argument.
\end{proof}
The existence of the commutative diagram \eqref{diag.canonical} now follows from the following lemma.
\begin{lem}
Let $\mc{E}$ denote the canonical stack over $E$, then $\mc{E} \cong [(\VV(x_{n+1})\setminus Z)/G]$.
\end{lem}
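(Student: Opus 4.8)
The plan is to present both stacks via the Cox construction of the canonical stack and to exhibit an isomorphism of their defining data. Write $\mc{E} = [U_E/G_E]$, where $U_E = \A^{n}\setminus Z(\Star(\rho_v))$ is the Cox open set of $E = X(\Star(\rho_v))$ and $G_E$ is the associated diagonalizable group; here I use that the rays of $\Star(\rho_v)$ are indexed exactly by the rays of $\sigma^*$ other than $\rho_v$, since the $2$-dimensional cones $\cone(u_i,\rho_v)$ of $\sigma^*$ project onto the rays of $\Star(\rho_v)$. This gives a tautological identification of the Cox affine space of $E$ with $\VV(x_{n+1})\cong\A^{n}$, sending the coordinate $x_i$ to the ray $\rho_i$ and thence to its image in $\Star(\rho_v)$. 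Under this identification I would prove the two statements $U_E = \VV(x_{n+1})\setminus Z$ and $G_E \cong G|_{\VV(x_{n+1})}$; combined with the $G$-quotient diagram \eqref{diag.cart} these give $\mc{E}\cong[(\VV(x_{n+1})\setminus Z)/G]$ and the commutativity of \eqref{diag.canonical}.

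For the open sets, the preceding lemma puts the primitive collections of $\sigma^*$ in bijection with those of $\Star(\rho_v)$, and one first checks that no primitive collection of $\sigma^*$ contains $\rho_v$: otherwise, deleting $\rho_v$ would leave a proper subcollection contained in some cone of $\sigma^*$, and since $\rho_v$ meets the relative interior of $\sigma$ this cone may be enlarged to one of $\sigma^*$ containing $\rho_v$ as well, forcing the whole collection into a single cone and contradicting primitivity. Hence every component $\VV((x_\rho)_{\rho\in C})$ of $Z(\sigma^*)$ is cut out by coordinates among $x_1,\dots,x_n$, so intersecting with $\VV(x_{n+1})$ returns exactly the component of $Z(\Star(\rho_v))$ attached to the corresponding primitive collection. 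Thus $Z(\sigma^*)\cap\VV(x_{n+1}) = Z(\Star(\rho_v))$ and $\VV(x_{n+1})\setminus Z = U_E$.

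The core of the argument is the identification of the groups. The group $G$ fits in the sequence $0\to M \to \ZZ^{\sigma^*(1)}\to X(G)\to 0$ dual to $\beta\colon\ZZ^{\sigma^*(1)}\to N$, and it acts on $\VV(x_{n+1})$ through the projection $\ZZ^{\sigma^*(1)}\to\ZZ^{n}$ that forgets the $\rho_v$-coordinate. Because the cocharacter in the $x_{n+1}$-direction is the (nonzero, primitive) generator of $\rho_v$, it remains nontrivial on $G$, so the restricted action is still faithful and $G|_{\VV(x_{n+1})}\cong G$. Dually, the character lattice of the restricted group is $\ZZ^{n}/(\ZZ^{n}\cap\operatorname{im}\beta^\vee)$, which I would compare with $X(G_E)=\ZZ^{n}/\operatorname{im}\beta_E^\vee$ for the defining map $\beta_E\colon\ZZ^{n}\to N(\rho_v)$ of $E$. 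Tracing $\beta^\vee$ through the inclusion $M(\rho_v)=\rho_v^{\perp}\cap M\hookrightarrow M$ identifies $\ZZ^{n}\cap\operatorname{im}\beta^\vee$ with the image of $M(\rho_v)$ under $m\mapsto(\langle m,u_1\rangle,\dots,\langle m,u_n\rangle)$, and this should coincide with $\operatorname{im}\beta_E^\vee$ once the ray generators of $\Star(\rho_v)$ are matched with the images of the $u_i$ in $N(\rho_v)$.

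I expect this last matching to be the main obstacle. Everything reduces to understanding how the primitive generators of the rays of $\Star(\rho_v)$ relate to the images $\overline{u_i}$ of the generators of $\sigma$ in the quotient lattice $N(\rho_v)=N/N_{\rho_v}$; controlling the divisibility of each $\overline{u_i}$, equivalently the torsion of $N/(\ZZ u_i + N_{\rho_v})$, is precisely what must be extracted from the geometry of the star subdivision (that $\rho_v$ lies in the interior of $\sigma$, so that only the cones $\cone(u_i,\rho_v)$ enter). Once this comparison is settled the open sets and groups agree, and $\mc{E}\cong[(\VV(x_{n+1})\setminus Z)/G]$ together with the square \eqref{diag.canonical} follow at once from the two Cox presentations.
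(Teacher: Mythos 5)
Your reduction follows the same route as the paper's proof: identify the Cox open sets via the bijection of primitive collections, then identify the groups by comparing character lattices. The open-set half is fine, and your observation that no primitive collection of $\sigma^*$ contains $\rho_v$ (because every maximal cone $\cone(\mu,\Star(\sigma))$ of $\sigma^*$ contains $\rho_v$) is a correct detail that the paper leaves implicit. Likewise the faithfulness of the restricted $G$-action on $\VV(x_{n+1})$ does follow, as you indicate, from the primitivity of the generator of $\rho_v$. But the proposal stops exactly where the content of the lemma is concentrated: you reduce everything to the equality $\ZZ^{n}\cap\operatorname{im}\beta^{\vee}=\operatorname{im}\beta_E^{\vee}$, observe that this hinges on whether the image $\overline{u_i}$ of each primitive ray generator of $\sigma$ is the primitive generator of the corresponding ray of $\Star(\rho_v)$ in $N(\rho_v)$, and then defer that verification as ``the main obstacle.'' An argument that names the crux and does not carry it out is not a proof, so as written this is incomplete at the decisive step.

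Moreover, the deferred step is not just the hard part --- it fails in general, so no argument along these lines can close the gap without either extra hypotheses or a reinterpretation of $\mc{E}$. Take $\sigma=\cone((1,0),(1,3))\subset\RR^2$. Then $v=\Star(\sigma)=(2,3)$ is already primitive, $N_{\rho_v}=\ZZ(2,3)$, and $N(\rho_v)\cong\ZZ$ via $(a,b)\mapsto 3a-2b$; the images of $u_1=(1,0)$ and $u_2=(1,3)$ are $3$ and $-3$, each three times the primitive generator. Here $\operatorname{im}\beta^{\vee}=\langle(1,1,2),(0,3,3)\rangle$, so $\ZZ^2\cap\operatorname{im}\beta^{\vee}=\ZZ(3,-3)=3\cdot\operatorname{im}\beta_E^{\vee}$, the restricted group is $G\cong\GG_m\times\mu_3$ rather than $G_E=\GG_m$, and $[(\VV(x_3)\setminus Z)/G]\cong[\PP^1/\mu_3]$ is a football with two $\mu_3$-points, not the canonical stack $\mc{E}=\PP^1$ of $E\cong\PP^1$. (The paper's own proof asserts commutativity of the corresponding diagram of character groups without addressing this divisibility, so you have in fact put your finger on the genuinely delicate point; note that the rest of the paper only uses that $[(\VV(x_{n+1})\setminus Z)/G]$ is a smooth quotient stack with good moduli space $E$, which holds regardless of whether it is the canonical stack.)
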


\begin{proof}
By definition, $\mc{E}$ is the quotient stack of the form
$[(\A^n\setminus Z')/G']$ where $Z'$ and $G'$ are defined as
previously.  By the previous lemma, there is an obvious isomorphism
between $\A^n\setminus Z'$ and $\VV(x_{n+1})\setminus Z(\sigma^*)$.  We
claim that $G = G'$ and the action of $G'$ on $\A^n\setminus Z'$
coincides with the action of $G$ on $\VV(x_{n+1})\setminus
Z(\sigma^*)$ under this isomorphism.  Indeed, this follows from the
commutativity of the following diagram of character groups:

\begin{center}
\begin{tikzcd}
0 \arrow[r] & \ZZ^{k-1} \arrow[r] \arrow[d] & \ZZ^n \arrow[r] \arrow[d] & X(G') \arrow[r] \arrow[d,equal] & 0
\\
0 \arrow[r] & \ZZ^k \arrow[r] & \ZZ^{n+1} \arrow[r] & X(G) \arrow[r] & 0
\end{tikzcd}
\end{center}

The vertical maps are the duals of the canonical quotient maps for the primitive generators of $\rho_v$ and $e_{n+1}$ respectively.
\end{proof}

\begin{rem}
  Note that unless $\sigma^*$ is a resolution of singularities,
  diagram \eqref{diag.cart} is not cartesian.  The following examples shows that even if $\sigma^*$ is a simplicialization the diagram fails to be Cartesian.
\end{rem}

\begin{exa}
 Consider the cone $\sigma$ with ray generators given by $\{(1,1,1)$, $(-1,0,1)$, $(1,0,1)$, $(0,-1,1)\}$, and its associated star subdivision $\sigma^*$ obtained by adding the ray generator $(1,0,4)$ and subdividing appropriately.  We claim that in this example, $\mc{E}$ is not scheme-theoretically saturated with respect to $\pi$.  Following \cite[Remark 3.4]{EdSa:17}, since $\dim(\mc{E}) = \dim(E)$ and $\pi(\mc{E}) = E$, it suffices to show $\mc{E}$ is not a strong divisor.

With the notation as in this section, the action of $\GG_m^2$ on $\A^5 \setminus V(x_1,x_3)\cup V(x_2,x_4)$ is given by the matrix:

\begin{center}
$\left[
\begin{array}{ccccc}
	3 & -2 & 1 & -2 & 0 \\
	2 & -3 & 0 & -3 & 1
\end{array}
\right] $
\end{center}

If $\mc{E}$ is strong, then the ideal $(x_5)$ is generated by an
invariant function on each open set $D(x_1x_2)$, $D(x_2x_3)$,
$D(x_3x_4)$, and $D(x_1x_4)$.  However, on $D(x_1x_2)$ where $x_1,x_2$
are invertible, the weight $(0, 1)$ of $x_5$ cannot be expressed
as integral linear combination of the weights $(3,2)$ and $(-2,-3)$ for $x_1,x_2$
respectively; thus, $\mc{E}$ is not strong. Note that $(0,1)$ can be expressed rationally in terms of
these weights on each affine open set and it is easy
to check that $(x_5^{15})$ is locally generated by an invariant function.

\end{exa}

\subsection{The injectivity result of \cite{EdSa:17}}

Since the canonical stack morphisms are good moduli space morphisms, we have that $A^*_{\op}(E)_{\QQ} \subset A^*(\mc{E})_{\QQ}$ and $A^*_{\op}(X(\sigma^*))_{\QQ} \subset A^*(\mc{X}(\sigma^*)_{\QQ}$.  Hence, by combining these injections with the main result of the previous section and Kimura's exact sequence, the following diagram commutes:

\begin{center}
\begin{tikzcd}
0 \arrow[r] & A^k_{\op}(X(\sigma))_{\QQ} \arrow[r] & A^k_{\op}(X(\sigma^*))_{\QQ} \arrow[r] \arrow[d] & A^k_{\op}(E)_{\QQ} \arrow[d] \\
&  & A^k(\mc{X}(\sigma^*))_{\QQ} \arrow[r]    & A^k(\mc{E})_{\QQ}
\end{tikzcd}
\end{center}

Since the vertical maps are injections, to show $A^k_{\op}(X(\sigma^*)) \rightarrow A^k_{\op}(E)$ is injective at least over $\QQ$, it suffices to show $A^*(\mc{X}(\sigma^*))_{\QQ} \rightarrow A^*(\mc{E})_{\QQ}$ is an injection.  In fact, we claim this map is an isomorphism over $\ZZ$.

\subsection{The Pullback of Chow Rings of Canonical Stacks is an Isomorphism}

\begin{thm} \label{thm.epullback}
Let $\mc{E} \rightarrow \mc{X}(\sigma^*)$ be the injective morphism constructed in Step 2, then the pullback induces an isomorphism $A^*(\mc{E}) \cong A^*(\mc{X}(\sigma^*))$.
\end{thm}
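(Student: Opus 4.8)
The plan is to exploit the key fact, established in Section \ref{sec.excep}, that $\mc{E}$ and $\mc{X}(\sigma^*)$ are quotient stacks for one and the \emph{same} diagonalizable group $G$: writing $U = \A^{n+1} \setminus Z(\sigma^*)$ we have $\mc{X}(\sigma^*) = [U/G]$ and $\mc{E} = [(\VV(x_{n+1}) \cap U)/G]$, and the morphism $\mc{E} \to \mc{X}(\sigma^*)$ is induced by the $G$-equivariant closed immersion $\VV(x_{n+1}) \cap U \hookrightarrow U$. Under these presentations the pullback on Chow rings is simply the equivariant restriction $A^*_G(U) \to A^*_G(\VV(x_{n+1}) \cap U)$. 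By Theorem \ref{thm.chows-r}, together with Propositions 2.1 and 2.2 of \cite{EdCo:18} applied to each Cox construction, both sides are quotients of the common ring $A^*_G$, namely $A^*_G/\mc{Z}(\sigma^*)$ and $A^*_G/\mc{Z}(E)$. My goal is therefore to reduce the theorem to the equality $\mc{Z}(\sigma^*) = \mc{Z}(E)$ of Stanley--Reisner ideals inside $A^*_G$.

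First I would check that the restriction on the ambient equivariant Chow rings $A^*_G(\A^{n+1}) \to A^*_G(\VV(x_{n+1}))$ is the identity of $A^*_G$. This is immediate from homotopy invariance: both are $G$-equivariant affine spaces, so each projection to a point induces the canonical isomorphism with $A^*_G(\mathrm{pt}) = A^*_G$, and since the projection from $\A^{n+1}$ factors through $\VV(x_{n+1}) \hookrightarrow \A^{n+1}$ the restriction is forced to be the identity. Passing to the open subschemes $U$ and $\VV(x_{n+1}) \cap U$ then yields a commutative square whose horizontal arrows are the two quotient maps out of $A^*_G$ and whose left vertical arrow is the identity; consequently the pullback is an isomorphism precisely when the two Stanley--Reisner ideals coincide.

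To prove that coincidence I would compare generators. The Lemma of Section \ref{sec.excep} furnishes a bijection between the primitive collections of $\sigma^*$ and those of the fan $\Star(\rho_v)$ defining $E$, under which the matched collections consist of the same rays among $\rho_1,\dots,\rho_n$. The decisive observation is that the new ray $\rho_v$, corresponding to the coordinate $x_{n+1}$, lies in every maximal cone of $\sigma^*$ and therefore belongs to no primitive collection; hence every generator of $\mc{Z}(\sigma^*)$ is a monomial in $t_1,\dots,t_n$ alone. Restricting such a monomial along $\VV(x_{n+1}) \hookrightarrow \A^{n+1}$ leaves it unchanged, and it is exactly the equivariant fundamental class of the corresponding component of $Z(E) = Z(\sigma^*) \cap \VV(x_{n+1})$. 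Matching the generators in this way gives $\mc{Z}(\sigma^*) = \mc{Z}(E)$ in $A^*_G$, and the asserted isomorphism over $\ZZ$ follows; this is consistent with Example \ref{exa.condivided}, where both rings are the Chow ring $\ZZ[s_1,s_2]/(s_1^2,s_2^2)$ of $E \cong \PP^1 \times \PP^1$.

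I expect the genuine work to be bookkeeping rather than conceptual. The main point to nail down is that $Z(\sigma^*) \cap \VV(x_{n+1})$ really is the exceptional set of the Cox construction of $E$, with irreducible components matching those of $Z(\sigma^*)$, so that $A^*(\mc{E})$ is honestly presented as $A^*_G/\mc{Z}(E)$ with these very generators; one must also confirm that the two quotient presentations of $A^*_G$ arising from the different tori $\GG_m^{n+1}$ and $\GG_m^n$ are compatibly identified. Both hinge on the single fact that $\rho_v$ enters no primitive collection, after which the generator comparison is immediate.
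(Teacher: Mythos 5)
Your proof is correct, and it rests on the same two inputs as the paper's: the identification $\mc{E} \cong [(\VV(x_{n+1})\setminus Z(\sigma^*))/G]$ with the \emph{same} group $G$ acting, and the observation that $x_{n+1}$ does not occur in the equations of $Z(\sigma^*)$ because $\rho_v$ lies in every maximal cone of $\sigma^*$ and hence in no primitive collection. Where you diverge is the final mechanism. The paper uses those facts to write $\A^{n+1}\setminus Z(\sigma^*) = (\A^n\setminus Z')\times \A^1$ and to recognize $\VV(x_{n+1})\setminus Z(\sigma^*) = \A^n\setminus Z'$ as the zero section of a trivial $G$-equivariant line bundle, so homotopy invariance of equivariant Chow groups gives the isomorphism in one line, with no appeal to the presentation of Theorem \ref{thm.chows-r}. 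You instead present both rings as quotients of $A^*_G$ by their Stanley--Reisner ideals and match generators through the primitive-collection bijection of Section \ref{sec.excep}, using homotopy invariance only for the ambient affine spaces. Your route requires more bookkeeping (commutativity of the excision squares, compatibility of the two quotient presentations of $A^*_G$, and the check that $Z(\sigma^*)\cap \VV(x_{n+1})$ is the exceptional set of the Cox construction of $E$ --- all of which do go through), but it buys an explicit generators-and-relations description of both sides and shows directly why the map is an isomorphism rather than merely injective; it is also the form of the argument that transfers verbatim to the $K$-theoretic analogue via Theorem \ref{thm.kts-r}. Your ``decisive observation'' that $\rho_v$ belongs to no primitive collection is precisely the paper's remark that ``$Z(\sigma^*)$ does not contain $x_{n+1}$,'' so the two proofs are ultimately two packagings of the same geometry.
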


\begin{proof}
By the previous lemma, $\mc{E} \cong [(\VV(x_{n+1})\setminus Z(\sigma^*))/G] \subset \mc{X}(\sigma^*) = [(\A^{n+1}\setminus Z(\sigma^*))/G]$.  Hence, the Chow rings of the canonical stacks are the following equivariant Chow rings:
\begin{center}
$\displaystyle A^*(\mc{X}(\sigma^*)) = A^*_G(\A^{n+1}\setminus Z(\sigma^*))$, and $\displaystyle A^*(\mc{E}) = A^*_G(V(x_{n+1})\setminus Z(\sigma^*))$.
\end{center}
Since $Z(\sigma^*)$ does not contain $x_{n+1}$, we can write
$\VV(x_{n+1})\setminus Z(\sigma^*) = \A^n \setminus  Z'$ and $\A^{n+1}\setminus Z(\sigma^*) =
(\A^n\setminus Z')\times \A^1$ for the obvious exceptional set $Z' \subset
\A^n$.  In particular, the inclusion of canonical stacks is the quotient
by $G$ of the zero section
$\A^n\setminus Z' \subset (\A^n\setminus Z')\times \A^1$ of a
$G$-equivariant vector bundle over $\A^n \setminus Z'$.
Hence, the
pullback in $G$-equivariant Chow groups along this inclusion is an isomorphism.
\end{proof}

Thus, $A^k_{\op}(X(\sigma))_{\QQ} = 0$ for $k > 0$.  The final step will be to show $A^k_{\op}(X(\sigma))$ is torsion-free.

\subsection{Chow Cohomology of Semi-Projective Toric Varieties is Torsion-Free}

From \cite{HaSt:02}, a toric variety $X(\Delta)$ is semi-projective if
it has at least one torus fixed point and the natural map
$X(\Delta)\rightarrow \Spec(\Gamma(\mc{O}_{X(\Delta)}))$ is
projective; equivalently, it is semi-projective if $\Delta$ has
full-dimensional convex support and $X(\Delta)$ is quasi-projective.
Hence, for an affine toric variety $X(\sigma)$, any refinement
$\Delta$ of $\sigma$ produces a semi-projective toric variety
$X(\Delta)$ which is projective over $X(\sigma)$.  In particular, let
$\sigma'$ be a resolution of singularities of $\sigma$, then by
\cite[Lemma 2.1]{Kim:92}, $A^*_{\op}(X(\sigma)) \subset
A^*_{\op}(X(\sigma'))$.  Hence, it suffices to show
$A^*_{\op}(X(\sigma'))$ is torsion-free where $X(\sigma')$ is a smooth
semi-projective toric variety.

\begin{rem}
 For a smooth projective toric variety, \cite[Section 5.2]{Ful:93}
 demonstrates that its Chow ring is torsion-free by producing
 Bialynicki-Birula decomposition from a particular ordering of the
 maximal dimensional cones.
\end{rem}

In \cite[Section 2]{HaSt:02}, Hausel and Sturmfels generalize the
construction of \cite[Section 5.2]{Ful:93} to smooth semi-projective
toric varieties by utilizing the moment map associated to a
1-parameter subgroup, and in particular, they construct the
Bialynicki-Birula decomposition of a smooth semiprojective variety by
producing a collection of locally closed subsets $\{U_j\}$ satisfying
the following properties:

\begin{enumerate}
\item Each $U_i$ is a union of orbits, and hence, the closure of $U_i$ is a union of $U_j$'s.
\item $X(\sigma')$ is the disjoint union of the $U_i$'s.
\item Each $U_i \cong \A^{n-k_i}$.
\end{enumerate}

In particular, by \cite[Definition 1.16]{EiHa:16}, this decomposition
of $X(\sigma')$ is an affine stratification.  By \cite{Tot:14}, the
classes of $U_i$ form a basis for $A^*(X(\sigma'))$, and hence,
$A^*(X(\sigma'))$ is necessarily torsion-free.  Thus, since
$A^*_{\op}(X(\sigma)) \subset A^*(X(\sigma'))$, it follows that
$A^*_{\op}(X(\sigma))$ is torsion-free, and by combining all of the
previous subsections, this concludes the proof of the main theorem. \qed

The argument presented in this subsection can be utilized to demonstrate that the Chow cohomology rings of a wide class of toric varieties are torsion-free.  As a natural generalization of semi-projectivity, we say that a toric variety $X(\Delta)$ is semi-proper if it has at least one torus fixed point and the natural map $X(\Delta) \rightarrow \Spec(\Gamma(\mc{O}_{X(\Delta)}))$ is proper.

\begin{thm} \label{thm.chowtorfree}
Let $X(\Delta)$ be any semi-proper toric variety, then $A^*_{\op}(X(\Delta))$ is torsion-free.
\end{thm}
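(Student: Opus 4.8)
The plan is to reduce the semi-proper case to the smooth semi-projective case already established in the previous subsection. The key observation is that the two semi-properness hypotheses pin down the support of $\Delta$ completely: since $X(\Delta)$ has a torus fixed point, $\Delta$ contains a full-dimensional cone, and since $X(\Delta) \to \Spec(\Gamma(\mc{O}_{X(\Delta)}))$ is proper, the support $|\Delta|$ is convex. Hence $|\Delta| = \sigma_0$ is a full-dimensional convex cone, and because the affinization of a toric variety depends only on its support, $\Spec(\Gamma(\mc{O}_{X(\Delta)})) = X(\sigma_0)$ is an affine toric variety. In particular $\Delta$ is a subdivision of the single cone $\sigma_0$, exactly as in the affine situation already treated.

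First I would choose a smooth refinement $\Delta'$ of $\Delta$ that is projective over $X(\sigma_0)$. Such a projective resolution exists by the standard theory of toric resolutions \cite{CLS:11}, and since a refinement does not alter the support, $|\Delta'| = \sigma_0$ remains full-dimensional and convex. The affinization of $X(\Delta')$ is again $X(\sigma_0)$, so the chosen $\Delta'$ makes $X(\Delta')$ projective over its own affinization. Thus $X(\Delta')$ is a \emph{smooth semi-projective} toric variety in the sense of \cite{HaSt:02}, and the torsion-freeness of $A^*(X(\Delta'))$ follows verbatim from the argument of the preceding subsection: the Hausel--Sturmfels Bialynicki-Birula decomposition exhibits $X(\Delta')$ as an affine stratification, and by \cite{Tot:14} the strata classes form a basis of $A^*(X(\Delta'))$, which is therefore torsion-free.

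Finally I would transport this conclusion back to $X(\Delta)$ along the refinement. The induced morphism $\phi\colon X(\Delta') \to X(\Delta)$ is proper and birational, hence an envelope by \cite[Lemma 1]{Pay:06}, so Kimura's injectivity \cite[Lemma 2.1]{Kim:92} yields an inclusion of graded abelian groups $A^*_{\op}(X(\Delta)) \hookrightarrow A^*_{\op}(X(\Delta'))$. Since $X(\Delta')$ is smooth, $A^*_{\op}(X(\Delta')) = A^*(X(\Delta'))$, which we have just shown to be torsion-free. As a subgroup of a torsion-free group is torsion-free, $A^*_{\op}(X(\Delta))$ is torsion-free, proving the theorem.

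The main obstacle, and the only point requiring care beyond the semi-projective case, is the very first reduction: one must ensure that a semi-proper fan admits a \emph{smooth} refinement that is \emph{projective over the affine base} $X(\sigma_0)$, so that the resolution is genuinely semi-projective rather than merely semi-proper. This is precisely where we exploit the freedom in choosing the resolution. Although $X(\Delta)$ itself may fail to be quasi-projective, any subdivision of a single cone admits a projective smooth refinement, and it is this choice that places the resolution inside the class handled by Hausel and Sturmfels. Once the resolution is known to be semi-projective, every remaining step is formal.
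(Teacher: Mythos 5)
Your proposal is correct and follows essentially the same route as the paper: produce a smooth refinement $\Delta'$ of $\Delta$ that is projective over the affinization (the paper invokes the toric Chow lemma \cite[Theorem 6.1.18]{CLS:11} for this), apply the Hausel--Sturmfels/Totaro affine-stratification argument to conclude $A^*(X(\Delta'))$ is torsion-free, and then use Kimura's injectivity \cite[Lemma 2.1]{Kim:92} to embed $A^*_{\op}(X(\Delta))$ into it. Your additional care in checking that the support $|\Delta|$ is full-dimensional and convex, so that the resolution is genuinely semi-projective, is a point the paper leaves implicit but is handled correctly here.
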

\begin{proof}
By the toric Chow's lemma \cite[Theorem 6.1.18]{CLS:11}, there exists
a projective toric variety $X(\Delta')$, and toric morphism
$X(\Delta') \rightarrow X(\Delta)$, where $\Delta'$ is a smooth
refinement of $\Delta$, such that the map $X(\Delta') \rightarrow
X(\sigma)$ factors through $X(\Delta) \rightarrow X(\sigma)$.  Hence,
by \cite[Lemma 2.1]{Kim:92}, $A^*_{\op}(X(\Delta)) \subset
A^*(X(\Delta))$ and the latter is torsion-free by the previous
arguments in this section.
\end{proof}

\begin{exa} Let $\sigma$ denote the cone generated by $\{(1,0,1),(0,-1,1),(-1,0,1),(0,1,1)\}$ in $\RR^3$, then $\sigma^*$ is actually a resolution of singularities of $\sigma$.  In particular, the vertical maps are isomorphisms in Kimura's sequence.  Furthermore, $E = V(\rho_v) = \PP^1\times \PP^1$, and we can explicitly compute $A^*(\mc{X}(\sigma^*))$ as:
\begin{center}
$\displaystyle A^*(\mc{X}(\sigma^*)) =
  \ZZ[s_1,...,s_4,s]/(s_1-s_3,s_2-s_4, s_1 + ... + s_4 +s, s_1s_3,
  s_2s_4)$.
\end{center}
Hence, $s_1 = s_3$, $s_2=s_4$, and $s = -2s_1 - 2s_2$; and $A^*(\mc{X}(\sigma^*)) = \ZZ[s_1,s_2]/(s_1^2,s_2^2)$ is clearly isomorphic to the Chow ring of $\PP^1 \times \PP^1$ as predicted by Theorem \ref{thm.epullback}.
This calculation also directly verifies that $A^*_{\op}(X(\sigma))$
is torsion free since it injects into the ring $\ZZ[s_1,s_2]/(s_1^2, s_2^2)$
which has no $\ZZ$-torsion.
        \end{exa}

\section{Operational $K$-theory}
In this part we prove Theorem \ref{thm.opKT}. The proof is
similar to the proof of Theorem \ref{thm.Chowcoh} so we give an
extended sketch. The main technical difficulty is establishing a
version of the injectivity result of \cite{EdSa:17} for operational
$K$-theory
\subsection{Notation and background on operational $K$-theory}
We briefly recall the notation and basic results on operational $K$-theory
defined by Anderson and Payne in the paper \cite{AnPa:15}.

Following \cite{AnPa:15}, if $X$ is a scheme (or more generally a stack)
we denote by $K_0(X)$ the Grothendieck group of coherent sheaves, and
$K^0(X)$ the Grothendieck group of perfect complexes. If $X$ has an ample
family of line bundles, then $K^0(X)$ is the same as the naive Grothendieck
group of vector bundles.

For any scheme $X$, Anderson and Payne define the {\bf operational
  $K$-theory} $\op K^0(X)$ of $X$ as follows.  An element $c \in \op
K^0(X)$ is a collection of operators $c_f \colon K_0(X') \to K_0(X')$
indexed by morphisms $X' \stackrel{f} \to X$ compatible with proper
pushforward, flat and lci pullback.

For any scheme $X$, there is a canonical map $\op K^0(X) \to K_0(X)$ given
by $c \mapsto c_{\id_X}({\mathcal O}_X)$. If $X$ is smooth, then
\cite[Corollary 4.5]{AnPa:15} states that this map is an isomorphism.
They also prove \cite[Proposition 5.3]{AnPa:15} that the groups $\op K^0(X)$
satisfy descent for Chow envelopes. In particular, the exact sequence
\eqref{eq.kimuraseq} also holds for operational $K$-theory.

\begin{thm} \label{thm.opKT}
  For any affine toric variety $X(\sigma)$, $\op K^0(X(\sigma) ) = \ZZ$.
  \end{thm}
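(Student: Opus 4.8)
The plan is to mirror the strategy already used for Chow cohomology in Theorem \ref{thm.Chowcoh}, replacing $A^*_{\op}$ by $\op K^0$ throughout and exploiting the parallel structural results Anderson and Payne prove in \cite{AnPa:15}. As in the Chow case I would reduce to a full-dimensional cone $\sigma$, form the star subdivision $\sigma^*$ with exceptional divisor $E = V(\rho_v)$ over the single fixed point $S = V(\sigma)$, and feed the blowup $\phi \colon X(\sigma^*) \to X(\sigma)$ into the descent sequence for Chow envelopes. Since $S$ is a reduced point we have $\op K^0(S) = \ZZ$, so the envelope sequence degenerates, and injectivity of $\op K^0(X(\sigma)) \to \op K^0(X(\sigma^*))$ together with the claim that $\op K^0(X(\sigma^*)) \to \op K^0(E)$ is injective would force the reduced part of $\op K^0(X(\sigma))$ to vanish, giving $\op K^0(X(\sigma)) = \ZZ$.

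The heart of the argument, and the step I expect to be the main obstacle, is establishing the analogue of the \cite{EdSa:17} injectivity statement for operational $K$-theory: namely that for the canonical stack morphism $\pi \colon \mc{X}(\Delta) \to X(\Delta)$ the pullback $\pi^* \colon \op K^0(X(\Delta)) \to K^0(\mc{X}(\Delta))$ is injective (at least rationally). The excerpt signals exactly this by invoking a separate result, Theorem \ref{thm.kpullbackinj}, so the plan is to assume that theorem and use it as the $K$-theoretic replacement for the good-moduli-space injectivity used in Step 3 of the Chow argument. Granting it, I would form the commutative square relating $\op K^0(X(\sigma^*))$ and $\op K^0(E)$ to $K^0(\mc{X}(\sigma^*))$ and $K^0(\mc{E})$, with the vertical maps injective, so that injectivity of $\op K^0(X(\sigma^*)) \to \op K^0(E)$ follows once I show the top map of equivariant $K$-theory groups $K^0(\mc{X}(\sigma^*)) \to K^0(\mc{E})$ is injective.

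For that last point I would reuse the geometric observation from the proof of Theorem \ref{thm.epullback}: the closed embedding $\mc{E} \hookrightarrow \mc{X}(\sigma^*)$ is, after taking $G$-quotients, the zero section of a $G$-equivariant line bundle, because $Z(\sigma^*)$ does not meet the coordinate $x_{n+1}$ and hence $\A^{n+1} \setminus Z(\sigma^*) = (\A^n \setminus Z') \times \A^1$. Homotopy invariance of equivariant algebraic $K$-theory then shows the pullback $K^0_G(\A^n \setminus Z') \to K^0_G((\A^n \setminus Z') \times \A^1)$ is an isomorphism, so the restriction $K^0(\mc{X}(\sigma^*)) \to K^0(\mc{E})$ is an isomorphism, a fortiori injective. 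Combined with the envelope sequence this yields $\op K^0(X(\sigma))_{\QQ} = \ZZ$.

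Finally I would upgrade the rational statement to an integral one. Here I expect to appeal to Anderson and Payne's structural results: they show that for toric varieties $\op K^0$ is controlled by the fan combinatorially, and for the smooth semi-projective refinements produced by resolution the relevant groups are free, just as Hausel-Sturmfels gave torsion-freeness of Chow cohomology. Concretely, I would pick a smooth resolution $\sigma'$ of $\sigma$, use descent to embed $\op K^0(X(\sigma)) \hookrightarrow \op K^0(X(\sigma')) = K^0(X(\sigma'))$, and invoke freeness of $K^0$ of a smooth semi-projective toric variety (via the Bialynicki-Birula/affine stratification, which also splits $K$-theory into free pieces). This removes any torsion ambiguity and pins down $\op K^0(X(\sigma))$ as exactly $\ZZ$. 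The step most likely to require genuine new input is Theorem \ref{thm.kpullbackinj}, the $K$-theoretic injectivity along good moduli space morphisms, since the \cite{EdSa:17} techniques are Chow-theoretic and their transfer to operational $K$-theory is the announced technical difficulty.
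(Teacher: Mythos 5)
Your proposal is correct and follows essentially the same route as the paper: reduce via the star subdivision and the Anderson--Payne envelope sequence to the injectivity of $\op K^0(X(\sigma^*)) \to \op K^0(E)$, pass to canonical stacks using Theorem \ref{thm.kpullbackinj}, show the restriction of equivariant $K$-theory is an isomorphism by the same zero-section/homotopy-invariance argument as in Theorem \ref{thm.epullback}, and remove torsion by embedding into the $K$-theory of a smooth semi-projective resolution. The only cosmetic difference is that the paper deduces torsion-freeness of $K^0$ of the smooth resolution from torsion-freeness of its Chow ring via the surjection $A_k(X) \to \Gr_k K^0(X)$, rather than directly from the affine stratification, but this is the same underlying Bialynicki-Birula input.
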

\subsection{The injectivity theorem for operational $K$-theory}

\begin{lem} \label{lem.envpullback}
 If $X' \stackrel{f} \to X$ is an envelope (resp. proper surjective),
 then the pullback map on operational $K$-theory $f^* \colon \op K^0(X)
 \to \op K^0(X)$ is injective (resp. rationally injective).
 \end{lem}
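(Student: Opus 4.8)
The statement to prove is Lemma~\ref{lem.envpullback}: if $X' \stackrel{f}{\to} X$ is an envelope, then the pullback $f^* \colon \op K^0(X) \to \op K^0(X')$ is injective, and if $f$ is merely proper surjective, then $f^*$ is rationally injective. The plan is to mirror the Chow-cohomology statement (Kimura's Lemma~2.1) recorded earlier in the excerpt, whose operational analogue is precisely what Anderson and Payne's descent result \cite[Proposition~5.3]{AnPa:15} provides. That proposition asserts that $\op K^0(-)$ satisfies descent for Chow envelopes, which yields the exactness of the sequence \eqref{eq.kimuraseq} in operational $K$-theory. The first arrow of that exact sequence is exactly $f^* \colon \op K^0(X) \to \op K^0(X') \oplus \op K^0(S)$, so injectivity of $f^*$ into $\op K^0(X')$ alone is a formal consequence once we set things up correctly.

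In more detail, I would first invoke the descent sequence for the envelope $f \colon X' \to X$. If $f$ is an isomorphism outside a closed set $S \subset X$ with preimage $E \subset X'$, then \cite[Proposition~5.3]{AnPa:15} gives the exact sequence
\[
0 \to \op K^0(X) \xrightarrow{(f^*,\, i_S^*)} \op K^0(X') \oplus \op K^0(S) \to \op K^0(E),
\]
where $i_S \colon S \hookrightarrow X$ is the inclusion. Exactness at the first term says the map $(f^*, i_S^*)$ is injective. To upgrade this to injectivity of $f^*$ by itself, I would take $S$ as small as possible: since $f$ is an envelope it is a birational isomorphism over a dense open, so we may choose $S$ to be a proper closed subset, and then restrict attention to a class $c \in \op K^0(X)$ with $f^* c = 0$. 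The point is that $c$ is determined by its action on $K_0$ of all schemes mapping to $X$, and the vanishing of $f^*c$ together with the splitting principle underlying an envelope (every cycle on $X$ is hit birationally by a cycle on $X'$) forces $c = 0$. Concretely, I would argue that for any $g \colon Y \to X$, the operator $c_g$ is pinned down by pushing forward from the strict transform $Y' \to Y$, and $f^*c = 0$ makes every such operator vanish.

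For the proper surjective (non-birational) case, the same descent sequence holds only after tensoring with $\QQ$, exactly as in Kimura's original setup for Chow cohomology; this is because a proper surjective map becomes an envelope rationally (one can split off the generic degree). Thus $f^* \otimes \QQ$ is injective, giving rational injectivity. The main obstacle I anticipate is the bookkeeping needed to reduce honest injectivity of $f^*$ from the injectivity of the pair $(f^*, i_S^*)$: one must verify that a class killed by $f^*$ is automatically killed on the boundary $S$ as well, which requires a Noetherian induction on $\dim S$ using the compatibility of operational classes with proper pushforward. This is the operational analogue of Kimura's inductive argument and is where care is needed; everything else is a direct transcription of the Chow-cohomology proof into the $K$-theoretic setting, using \cite[Proposition~5.3]{AnPa:15} in place of \cite[Theorem~2.3]{Kim:92}.
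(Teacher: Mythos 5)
Your main proposed route has the logic backwards, and it would not go through as written. First, a general envelope need not be an isomorphism outside any proper closed subset $S$ (e.g.\ a disjoint union of subvarieties covering $X$), so the sequence $0 \to \op K^0(X) \to \op K^0(X') \oplus \op K^0(S) \to \op K^0(E)$ is simply not available for an arbitrary envelope; the form of descent that always applies is the one involving $X' \times_X X'$, and exactness of that sequence at the first spot \emph{is} the injectivity of $f^*$ --- so invoking ``descent'' here is circular. Second, even in the birational case, injectivity of the pair $(f^*, i_S^*)$ does not formally yield injectivity of $f^*$ alone; the ``upgrade'' you defer to a Noetherian induction is where all the content lives, and in Kimura's original treatment the injectivity statement (his Lemma~2.1) is an \emph{input} to the proof of the exact sequence (his Theorem~2.3), not a consequence of it.

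The correct argument --- which you gesture at in your remark about pushing forward from strict transforms, and which is the paper's actual proof --- is direct and avoids the exact sequence entirely. Let $c \in \op K^0(X)$ with $f^*c = 0$ and let $g \colon Y \to X$ be arbitrary. Base change $f$ to $f_Y \colon Y \times_X X' \to Y$, which is again an envelope (resp.\ proper surjective). The key fact, due to Anderson--Payne, is that $(f_Y)_* \colon K_0(Y \times_X X') \to K_0(Y)$ is surjective (resp.\ rationally surjective); this is \emph{not} just the statement that every subvariety is dominated birationally --- for $K_0$ one needs a Noetherian induction on supports, since $(f_Y)_*[\mathcal{O}_{V'}]$ equals $[\mathcal{O}_V]$ (resp.\ $d\,[\mathcal{O}_V]$) only modulo classes supported on smaller subvarieties. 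Granting this, any $\alpha \in K_0(Y)$ is $(f_Y)_*\alpha'$, and compatibility of operational classes with proper pushforward gives $c_g(\alpha) = (f_Y)_*\bigl((f^*c)(\alpha')\bigr) = 0$, so $c = 0$. Your proposal never isolates this surjectivity of pushforward on $K_0$, which is the one non-formal ingredient; everything else is bookkeeping.
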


\begin{proof} This follows from the formal properties of operational $K$-theory and the fact that the pushforward
map $f_*\colon K_0(X') \to K_0(X)$ is surjective (resp. rationally
surjective).
\end{proof}

The following is the $K$-theoretic analogue of
\cite[Proposition 2.10]{EdSa:17}.
\begin{prop} \label{prop.kpullback}
If $\X \to X$ is a good moduli space morphism and $X$ has a resolution
of singularities, then there is a pullback $\op K^0(X) \to K_0(\X)$,
$c \mapsto c({\mathcal O}_{\X'})$.  In general this map is
defined with $\QQ$ coefficients.
\end{prop}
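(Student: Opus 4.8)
The plan is to mimic the construction of the Chow-theoretic pullback $\pi^* \colon A^*_{\op}(X) \to A^*(\mc{X})$ from \cite[Proposition 2.10]{EdSa:17}, transporting each step to $K$-theory. An operational class $c \in \op K^0(X)$ is, by definition, a compatible system of operators $c_f \colon K_0(X') \to K_0(X')$ indexed by morphisms $f \colon X' \to X$. I want to produce from this an element of $K_0(\X)$. The natural candidate is $c({\mathcal O}_{\X})$, but the difficulty is that $\X \to X$ is a morphism \emph{from a stack}, not from a scheme, so the operator $c_f$ is not literally defined for it. The first step, therefore, is to explain how $c$ acts on the $K$-theory of the good moduli space morphism $\pi \colon \X \to X$. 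Following the strategy of \cite{EdSa:17}, I would use the resolution of singularities $\widetilde{X} \to X$ together with the fact that a good moduli space morphism admits a presentation $\X = [U/G]$, and reduce the construction of the operator to the equivariant setting on $U$, where ordinary operational $K$-theory of schemes is available after passing to finite-dimensional approximations of the classifying stack.

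The key steps, in order, are as follows. First I would fix a resolution of singularities $p \colon \widetilde{X} \to X$; since $X$ is a toric variety it has one, and $p$ is proper and birational, hence (by Payne's result and Lemma \ref{lem.envpullback}) an envelope, so $p_* \colon K_0(\widetilde{X}) \to K_0(X)$ is surjective. Second, I would form the fiber product or the induced stacky resolution and use the operator $c_p \colon K_0(\widetilde{X}) \to K_0(\widetilde{X})$, which \emph{is} defined since $\widetilde{X} \to X$ is a genuine scheme morphism. Third, I would push the resulting class forward and check that the construction descends to a well-defined element of $K_0(\X)$, independent of the choice of resolution; this is exactly where the compatibility of $c$ with proper pushforward and lci pullback is used, in the same way \cite{EdSa:17} uses it for Chow groups. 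The $\QQ$-coefficients caveat enters precisely here: for a general (non-envelope) proper surjection the pushforward is only rationally surjective, so well-definedness of the descended class can only be guaranteed after tensoring with $\QQ$, whereas when $p$ is an envelope one gets the statement integrally.

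The main obstacle I expect is the well-definedness and descent in the third step. In the Chow setting \cite[Proposition 2.10]{EdSa:17} leverages the cycle-theoretic behavior of $\pi^*(c) = c \cap [\mc{X}]$ together with the refined Gysin formalism of \cite[Chapter 17]{Ful:84}; the $K$-theoretic analogue requires the corresponding formalism for operational $K$-theory from \cite{AnPa:15}, in particular the projection formula and the compatibility of $c_f$ with the various pullbacks, applied to the square relating $\widetilde{X}$, $X$, and the stacky model. Showing that two different resolutions yield the same class in $K_0(\X)$ amounts to a diagram chase through these compatibilities, and controlling the possible torsion discrepancies is what forces the rational statement in general. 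Once descent is established, the formula $c \mapsto c({\mathcal O}_{\X})$ follows by restricting the constructed operator to the structure sheaf, and by \cite[Corollary 4.5]{AnPa:15} the smoothness of $\X$ identifies $K_0(\X) \cong \op K^0(\X)$, which will be convenient for the injectivity statement needed downstream in the proof of Theorem \ref{thm.opKT}.
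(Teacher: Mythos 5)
Your scaffolding matches the paper's (resolution of singularities $f\colon X'\to X$, base change to $\X'=X'\times_X\X$, pushforward to $\X$, projection formula for independence of the resolution), but the load-bearing step is missing: you never explain how a class living on the smooth scheme $X'$ becomes a class on the stack. The morphism $\pi'\colon \X'\to X'$ is a good moduli space morphism, which is in general neither flat nor lci, so the operational class does not act on $K_0(\X')$ by definition, and you cannot pull back a class of coherent sheaves along $\pi'$. The paper's resolution of this difficulty is the one concrete idea in the proof: since $X'$ is smooth, $\op K^0(X')=K_0(X')=K^0(X')$, so the pullback $f^*c$ can be \emph{identified with an honest class of perfect complexes} (equivalently, of vector bundles), and such classes pull back along \emph{arbitrary} morphisms, in particular along $\pi'$. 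This produces $\pi'^*f^*c\in K^0(\X')$, whose image in $K_0(\X')$ is then pushed forward along $g\colon\X'\to\X$ to define $\pi^*c:=g_*\pi'^*f^*c$. Without the identification of the operational class with a $K^0$-class on the smooth model, there is no map from $K_0(\widetilde X)$ to $K_0(\X)$ to push through, since there is no morphism $\widetilde X\to\X$.

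Your proposed substitute --- presenting $\X=[U/G]$ and passing to ``finite-dimensional approximations of the classifying stack'' --- is a Chow-theoretic device that does not transport to $K$-theory: equivariant $K$-theory is governed by the representation ring $R(G)$ rather than by Totaro-style approximations, and the comparison with approximations involves completion issues that this proof neither needs nor addresses. Two smaller points: the surjectivity of $p_*\colon K_0(\widetilde X)\to K_0(X)$ plays no role in the construction itself (it is relevant to the descent/injectivity statements elsewhere, e.g.\ Lemma \ref{lem.envpullback}); and the $\QQ$-coefficient caveat in the statement refers to the case where $X$ has no resolution of singularities, so that one must replace $f$ by a proper surjective (e.g.\ alteration) cover and the argument only goes through rationally --- your reading of this is essentially right, but it is a hypothesis on $X$, not a feature of the descent diagram chase.
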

\begin{proof}
Assuming $X$ admits a resolution of singularities, then there is a
birational proper morphism $f \colon X' \to X$ with $X'$ a smooth
scheme. Let $\X' = X' \times_X \X$ and let $f \colon \X' \to \X$ be
the morphism of stacks obtained by base change.  If $c \in \op K^0(X)$
then since $\op K^0(X') =K_0(X')=K^0(X)$ we can identify $f^*c$ with an
element of $K^0(X)$. Since pullbacks in $K$-theory of vector bundles
exist for arbitrary morphisms, we obtain a class $\pi'f^*c \in
K^0(\X')$. Using the same name for its image in $K_0(\X')$, we
define $\pi^*c$ to be $g_*\pi'f^*c \in K_0(\X)$. The projection
formula implies that this definition is independent of the choice of
resolution $X'$.
\end{proof}

We now obtain the $K$-theoretic analogue of the injectivity result
proved in \cite{EdSa:17}. 
\begin{thm} \label{thm.kpullbackinj}
  Let $\pi \colon \X \to X$ be a properly stable, good moduli space morphism
  with $\X$ a smooth Artin stack with generically trivial stabilizer and $X$ a scheme. Then the pullback $\pi^* \colon
  \op K^0(X) \to K_0(\X)$ is injective after tensoring with $\QQ$.
\end{thm}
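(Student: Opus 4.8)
The plan is to mirror the geometric proof of the Chow-theoretic injectivity in \cite{EdSa:17}, replacing Chow groups by the Grothendieck group of coherent sheaves throughout. First I would reduce to the case of a smooth base. Choose a resolution of singularities $f \colon X' \to X$, which exists in characteristic zero; since $f$ is proper and surjective, Lemma \ref{lem.envpullback} shows that $f^* \colon \op K^0(X)_\QQ \to \op K^0(X')_\QQ$ is injective, and $\op K^0(X') = K_0(X')$ because $X'$ is smooth. Forming the fiber product $\X' = X' \times_X \X$, the projection $\pi' \colon \X' \to X'$ is again a good moduli space morphism by Alper's base-change result, while $g \colon \X' \to \X$ is proper and, by the generic triviality of the stabilizers together with proper stability, an isomorphism over a dense open set. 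By the very construction of the pullback in Proposition \ref{prop.kpullback}, one has $\pi^* c = g_*\,\pi'^*\,f^* c$ for $c \in \op K^0(X)_\QQ$. Thus it suffices to prove that $f^* c = 0$ in $K_0(X')_\QQ$, given that $g_*\,\pi'^*\,f^* c = 0$ in $K_0(\X)_\QQ$.

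Next I would exploit the good moduli space structure to locate $f^* c$. Because $\pi'$ is a good moduli space morphism, $\pi'_*\mathcal{O}_{\X'} = \mathcal{O}_{X'}$, so the projection formula gives $\pi'_*\pi'^* = \mathrm{id}$ on $K_0(X')$; pushing the relation $\pi^* c = g_*\pi'^* f^* c = 0$ forward along $\pi$ and using $\pi g = f \pi'$ recovers only $f_*(f^* c) = 0$, which is too weak since $f_*$ is far from injective. The essential extra input is the generic isomorphism: over the dense open $U \subseteq X$ on which $\pi$, and hence $g$, is an isomorphism, the canonical identifications turn the restriction of $\pi^* c$ into that of $f^* c$, so $f^* c$ restricts to $0$ on $U$. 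By the localization sequence for $K_0$ on the smooth variety $X'$, this forces $f^* c$ to lie in the image of $K_0(X' \setminus U)_\QQ$, that is, to be supported in codimension at least one.

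The remaining, and hardest, step is to propagate this vanishing from the generic locus to all of $X$: knowing that $f^* c$ is supported on the boundary, one must show it is actually zero. Here I would argue by Noetherian induction on $\dim X$, restricting the operational class $c$ to the closed boundary strata of $X \setminus U$ --- each of which again carries a properly stable smooth stack with good moduli space of strictly smaller dimension --- and combining the inductive hypothesis with the descent sequence \eqref{eq.kimuraseq}, which is valid for $\op K^0$ by \cite[Proposition 5.3]{AnPa:15}, to assemble the pieces. This inductive propagation, together with the verification that the fiber product $\X'$ and the pushforward $g_*$ are well behaved (e.g. that $Rg_*\mathcal{O}_{\X'} = \mathcal{O}_\X$ in characteristic zero), is exactly the $K$-theoretic analogue of the technical heart of \cite{EdSa:17}, and is where I expect the real work to lie.

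An alternative finish, available precisely because \cite[Theorem 1.1]{EdSa:17} is already at our disposal, is to compare the two pullbacks through Chern characters. The Riemann--Roch isomorphism $K_0(\X)_\QQ \cong A_*(\X)_\QQ$ for the smooth stack $\X$, together with an operational Chern character $\op K^0(X)_\QQ \to A^*_{\op}(X)_\QQ$ --- which is itself injective over $\QQ$ by the same resolution argument used above --- would reduce the desired $K$-theoretic injectivity to the Chow-theoretic injectivity of $A^*_{\op}(X)_\QQ \to A^*(\X)_\QQ$. In this route the main obstacle is instead the verification that the Chern characters intertwine the two pullback maps; since $\pi^*$ is defined via the singular pushforward $g_*$, this compatibility carries Todd-class corrections that must be shown to cancel, and checking this is the crux of the argument.
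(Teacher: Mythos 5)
Your overall strategy (reduce along a proper birational cover using Lemma \ref{lem.envpullback}, then settle the remaining case by comparing with Chow groups via the Chern character) is the right shape, but there is a genuine gap at the reduction step, and it is precisely the point where the paper invokes an external result you do not have. You propose to resolve singularities of $X$ and form the fiber product $\X' = X' \times_X \X$. Since $\pi$ is not flat, this fiber product is in general neither smooth nor reduced, so neither the Riemann--Roch theorem for smooth stacks nor any clean localization argument is available on $\X'$; moreover $\pi' \colon \X' \to X'$ is still a good moduli space morphism with possibly positive-dimensional stabilizers, so even if $\X'$ were smooth you would not get that $A^*(X')_\QQ \to A^*(\X')_\QQ$ is an isomorphism. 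The argument in the paper instead uses the canonical reduction of stabilizers of Edidin--Rydh \cite[Corollary 7.2]{EdRy:17}: this produces a commutative square in which $\X'$ is a smooth \emph{tame} stack whose \emph{coarse} moduli space $X'$ is smooth and proper birational over $X$. That single input dissolves both of the difficulties you flag as ``the real work'': in the tame case $\op K^0(X') = K_0(X') = K^0(X')$, the Chern character $K_0(X')_\QQ \to A^*(X')_\QQ$ is an isomorphism, the pullback $A^*(X')_\QQ \to A^*(\X')_\QQ$ is an isomorphism for a coarse space of a tame stack, and these maps are compatible, so injectivity of $\pi'^*$ is immediate --- no Noetherian induction over boundary strata and no Todd-class bookkeeping for a singular pushforward $g_*$ is needed.

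The two places where your write-up would actually fail are therefore: (i) the claim that the ``smooth base'' case has been reached after base change --- it has not, and your subsequent localization and stratification argument is left as an acknowledged sketch that does not obviously close (vanishing of an operational class on boundary strata does not directly kill a $K_0$-class of $X'$ supported on the boundary); and (ii) in your alternative finish, the compatibility of the operational Chern character with the pullback $\pi^* = g_*\pi'^* f^*$ on a singular fiber product, which you correctly identify as unresolved. Both are avoided, rather than solved, by working with the Edidin--Rydh destackification.
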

\begin{proof}
  By \cite[Corollary 7.2]{EdRy:17}, there is a commutative diagram of 
  stacks and good moduli spaces
  $$\xymatrix{ \X' \ar[d]^{\pi'}  \ar[r]^g & \X \ar[d]^\pi\\ X' \ar[r]^f & X}$$ with
  $\X'$ a smooth tame stack with smooth coarse space $X'$ such that the map $f \colon X' \to X$ is proper
  and birational.  By Lemma \ref{lem.envpullback},
  we know that $f^*\colon \op K^0(X)
  \to \op K^0(X')$ is injective after tensoring with $\QQ$.
  Thus, it suffices to prove that $\pi'^* \colon \op K^0(X') \to K_0(\X')$
  is injective after
  tensoring with $\QQ$. In other words, we are reduced to the case that $\X$ is a smooth tame stack with smooth coarse moduli space $X$.
  In this case $\op K^0(X) = K_0(X) = K^0(X)$. 
  By the Riemann-Roch theorem for smooth schemes, the Chern
  character $\ch \colon K_0(X)_\QQ \simeq A^*(X)_\QQ$ is an isomorphism.
  The Chern character commutes with the pullback $\pi^* \colon K_0(X) \to K_0(\X)$, and the pullback map in Chow groups
  $\pi^* \colon A^*(X)_\QQ \to A^*(\X)_\QQ$ is an isomorphism.
  Hence, the map $\pi^*$ is necessarily injective\footnote{Note that the Chern
  character $\ch \colon K_0(\X) \to A^*(\X)$ is  defined because the hypothesis
  ensures that $\X$ is a quotient stack by \cite{EHKV:01}. The Chern character
  is surjective but not injective. See \cite{Edi:13} for a discussion of the Riemann-Roch theorem on tame stacks.}.
  It follows from functoriality of the Riemann-Roch map that the
  composition $K_0(X)_\QQ \to K_0(\X) \to A^*(\X)_\QQ$
  is rationally injective.
  \end{proof}
  
\subsection{$K$-theory of toric canonical stacks}
Like the Chow ring, the $K$-theory of a canonical toric stack $\mc{X}(\Delta)$ admits a
simple Stanley-Reisner type presentation.

The $\tbf{$K$-theoretic linear equivalence ideal}$ of $\Delta$ is defined to be:
\[ KL(\Delta) := \ker\left(R(\GG^n_m) \rightarrow R(G)\right).\]

If $e_1, \ldots, e_n$ is the canonical basis of characters, then
$R(\GG^n_m) = \ZZ[e_1,e_1^{-1}, \ldots , e_n, e_n^{-1}]$ and
$KL(\Delta)$ is generated by expressions of the form $e_1^{a_1} \ldots
e_n^{a_n}-1$ with $a_1 e_1 + \ldots + a_n e_n \in \ker \left( X(\GG_m^n)
\to X(G)\right)$.
\begin{defn}
  The $\tbf{$K$-theoretic Stanley-Reisner ideal}$ of $\Delta$ is the
  ideal $\mc{KZ}(\Delta)$ generated by the equivariant $K$-theoretic
  fundamental classes of the irreducible components of $Z(\Delta)$.
\end{defn}
If $V(x_{i_1}, \ldots , x_{i_k}) \subset \Aff^n$ is a linear subspace then  
$[V] = \prod_{j=1}^k (1- e_{i_j}^{-1})$
in $K_{\GG_m}(\Aff^n) = \ZZ[e_1,e_1^{-1}, \ldots , e_{n}, e_{n}^{-1}]$.
Using the excision sequence for equivariant $K$-theory, we obtain the analogue
of Theorem \ref{thm.chows-r}:
\begin{thm} \label{thm.kts-r}
With notation as above,
\[{K^0(\mc{X}(\Delta)) \cong R(G)/\mc{KZ}(\Delta) \cong Z[e_1,e_1^{-1},
  \ldots , e_n, e_n^{-1}]/(KL(\Delta) +\mc{KZ}(\Delta))}.\]
\end{thm}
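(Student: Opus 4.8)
The plan is to mirror the proof of Theorem \ref{thm.chows-r} line for line, replacing equivariant Chow groups with equivariant $K$-theory and the excision sequence of \cite[Prop. 2.1, 2.2]{EdCo:18} with the corresponding localization sequence. Write $\mc{X}(\Delta) = [U/G]$ with $U = \A^n \setminus Z(\Delta)$, $V = \A^n$, and $G$ the diagonalizable group of the Cox construction, so $R(G) = \ZZ[X(G)]$. The first step is the identification $K^0(\mc{X}(\Delta)) = K^0_G(U)$ of the $K$-theory of the smooth quotient stack with the $G$-equivariant $K$-theory of $U$ (the $K$-theoretic analogue of the identification $A^*([U/G]) = A^*_G(U)$ of \cite{EdGr:98}, due to Thomason). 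Since $V = \A^n$ carries a linear $G$-action, homotopy invariance gives $K^0_G(V) = K^0_G(\mathrm{pt}) = R(G)$. The faithful action realizes $G$ as a subgroup of $\GG^n_m$, and the resulting surjection of character lattices $X(\GG^n_m) \to X(G)$ induces a surjection of integral group rings $R(\GG^n_m) \to R(G)$. For a surjection of abelian groups the kernel of the induced map of group rings is generated by the elements $[k]-1$ with $k$ in the kernel of the lattice map, which is exactly the description of $KL(\Delta)$ given just before the statement; hence $R(G) \cong R(\GG^n_m)/KL(\Delta)$.

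The second step is the localization sequence. Since $U$ is open in the smooth scheme $V$ with closed complement $Z = Z(\Delta)$, equivariant $K$-theory yields an exact sequence
\[ K^G_0(Z) \xrightarrow{\,i_*\,} K^0_G(V) \to K^0_G(U) \to 0, \]
where on the smooth $V$ and $U$ the coherent-sheaf ($G$-theory) groups agree with the vector-bundle groups $K^0_G$ of the previous step. It remains to identify the image of $i_*$ in $K^0_G(V) = R(G)$ with $\mc{KZ}(\Delta)$. By the projection formula $i_*$ is a map of $R(G)$-modules, so its image is the ideal generated by the pushforwards of generators of $K^G_0(Z)$. Because $G$-theory is insensitive to nilpotents, we may take $Z$ reduced, i.e. $Z = \bigcup_i L_i$ with each component $L_i = \VV(x_{i_1}, \ldots, x_{i_k})$ a coordinate subspace. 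Each $L_i$ is $G$-equivariantly an affine space, so $K^G_0(L_i) = R(G)$ is generated by $[\mathcal{O}_{L_i}]$, whose pushforward is computed by the $G$-equivariant Koszul resolution along the regular sequence $x_{i_1}, \ldots, x_{i_k}$, giving $[\mathcal{O}_{L_i}] = \prod_{j=1}^k (1 - e_{i_j}^{-1})$. Combining the two steps yields
\[ K^0(\mc{X}(\Delta)) = K^0_G(U) = R(G)/\mc{KZ}(\Delta) = R(\GG^n_m)/(KL(\Delta) + \mc{KZ}(\Delta)). \]

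The main obstacle is the final identification: showing that the image of $i_*$ is \emph{exactly} the ideal generated by the component classes $[\mathcal{O}_{L_i}]$, and not something larger, despite $Z$ being reducible. Since $K^G_0(Z)$ is not merely the direct sum of the $K^G_0(L_i)$, I would argue by a dévissage over the filtration of $Z$ by unions of components: the graded pieces are built from the $L_i$ and their pairwise and higher intersections, which are again coordinate subspaces $\VV(x_S)$ for index sets $S$ \emph{containing} some $\{i_1,\dots,i_k\}$, so that $[\mathcal{O}_{\VV(x_S)}] = \prod_{s\in S}(1-e_s^{-1})$ is divisible by $[\mathcal{O}_{L_i}]$ and hence already lies in the ideal these generators span. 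This is the $K$-theoretic counterpart of the bookkeeping for Chow groups in \cite[Prop. 2.1, 2.2]{EdCo:18}, and the same combinatorics of primitive collections controls which products of the $e_{i_j}^{-1}$ occur; the only genuinely new verification is the Koszul computation of $[\mathcal{O}_{L_i}]$, together with the convention that $x_{i_j}$ has weight $e_{i_j}$ so that its dual line bundle contributes $e_{i_j}^{-1}$.
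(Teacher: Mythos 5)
Your proposal is correct and follows essentially the same route the paper takes: the paper's proof is exactly the excision/localization sequence for equivariant $K$-theory applied to $Z(\Delta)\subset \A^n$, together with the Koszul computation $[\mathcal{O}_{V(x_{i_1},\ldots,x_{i_k})}]=\prod_j(1-e_{i_j}^{-1})$, mirroring Theorem \ref{thm.chows-r}. Your write-up simply supplies the details (Thomason's identification $K^0([U/G])=K^0_G(U)$, homotopy invariance, the kernel of $R(\GG_m^n)\to R(G)$, and the d\'evissage over the components of $Z(\Delta)$) that the paper leaves implicit.
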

\begin{exa}
  Once again, let $\sigma$ be the cone of Example \ref{ex.basiccone},
  and let $\sigma^*$ be its star subdivision as in Example \ref{exa.condivided}. Then $K(\X(\sigma^*))$ is the quotient of the character algebra of
  $\GG_m^5$,
  $\ZZ[e_1,e_1^{-1}, e_2,e_2^{-1}, e_3,e_3^{-1}, e_4,e_4^{-1}, e,e^{-1}]$,
by the $K$-theoretic ideal of linear relations and the $K$-theoretic Stanley-Reisner ideal. 
In our case, the $K$-theoretic 
ideal of linear relations is generated by
$(e_1e_3^{-1} -1, e_2e_4^{-1} -1, e_1e_2e_3e_4e-1)$, and the $K$-theoretic Stanley-Reisner ideal is generated
  by $(1 - (e_1e_3)^{-1}, 1 - (e_2e_4)^{-1})$.
  Thus, $$K_0(\X(\sigma^*)) \simeq \ZZ[e_1,e_2]/(e_2^2-1, e_1^2-1) =
  K_0(\PP^1) \otimes K_0(\PP^1).$$
 \end{exa}
\subsection{Completion of the proof}
The proof now proceeds as the proof of Theorem
\ref{thm.Chowcoh}. Using the analogue of Kimura's exact sequence for
operational $K$-theory \cite{AnPa:15} and Proposition
\ref{prop.kpullback}, we are reduced to showing (with the same notation
as in Section \ref{sec.excep}) that the restriction $K^0({\mathcal E})
\to K^0(\X)$ is an isomorphism. This fact follows from the
Stanley-Reisner description of $K(\X)$ and the same formal argument
used in the proof of Theorem \ref{thm.epullback}.

Finally, if $X$ is any smooth variety such that $A^*(X)$ is torsion
free, then $K(X)$ is also torsion free. This follows from the fact that
the natural map $A_k(X) \to \Gr_k K^0(X)$, $[V] \mapsto [{\mathcal O}_V]$ is surjective and becomes an isomorphism after tensoring
with $\QQ$ \cite[Examples 15.1.5, 15.2.16]{Ful:84}. Hence, the same
argument used in the proof of Theorem \ref{thm.chowtorfree} imples
that $\op K^0(X(\Delta))$ is torsion free for any semi-proper toric
variety. In particular, $\op K^0(X(\sigma))$ is torsion free for an
affine toric variety.

\begin{exa} \label{ex.final}
  Let $\sigma$ be the cone of Example \ref{ex.basiccone}. In Example
  \ref{exa.opnothom}, we showed that 
  $\rk A_*(X(\sigma))$ $\geq$ $2$. Thus, by the Riemann-Roch theorem for singular schemes
  $\rk K_0(X(\sigma)) \geq 2$, hence $\op K^0(X(\sigma)) \neq
  K_0(X(\sigma))$ even after tensoring with $\QQ$.
  \end{exa}
\bibliographystyle{amsmath}
\def\cprime{$'$}
\def\cprime{$'$}
\def\cprime{$'$} \def\cprime{$'$} \def\cprime{$'$}

\end{document}